\documentclass[10pt, reqno, oneside]{amsart}   
\usepackage{geometry,amsmath,amssymb,verbatim,tikz,tikz-cd,ytableau,hyperref,shuffle,cleveref}

\definecolor{darkred}{rgb}{0.7,0,0}

\newtheorem{thm}{Theorem}[section]
\newtheorem{lemma}[thm]{Lemma}
\newtheorem{prop}[thm]{Proposition}
\newtheorem{cor}[thm]{Corollary}

\newtheorem{definition}[thm]{Definition}

\theoremstyle{definition}
\newtheorem{example}[thm]{Example}
\newtheorem{remark}[thm]{Remark}

\setcounter{MaxMatrixCols}{20}

\def\Hilb{\operatorname{H}}
\def\symmfnH{{\mathcal{H}}}
\def\symmfnE{{\mathcal{E}}}
\def\spn{\operatorname{span}}
\def\Sym{\operatorname{Sym}}
\def\trace{\operatorname{Tr}}
\def\sh{\operatorname{sh}}
\def\chr{\operatorname{char}}

\def\sgn{\operatorname{sgn}}

\def\CC{{\mathbb{C}}}

\def\QQ{{\mathbb{Q}}}
\def\RR{{\mathbb{R}}}
\def\ZZ{{\mathbb{Z}}}
\def\NN{{\mathbb{N}}}

\def\symm{{\mathfrak{S}}}

\def\aa{{\mathbf{a}}}
\def\bb{{\mathbf{b}}}
\def\cc{{\mathbf{c}}}
\def\dd{{\mathbf{d}}}

\def\gg{{\mathbf{g}}}
\def\kk{{\mathbf{k}}}
\def\vvv{{\mathbf{v}}}
\def\xx{{\mathbf{x}}}

% multiset binomial
\newcommand{\multichoose}[2]{\ensuremath{\left(\kern-.2em\left(\genfrac{}{}{0pt}{}{#1}{#2}\right)\kern-.2em\right)}}

\newcommand{\rvline}{\hspace*{-\arraycolsep}\vline\hspace*{-\arraycolsep}}

\title[Invariant theory for wreath products acting on superpolynomials]{Invariant theory for wreath products acting on superpolynomials}
\author{Trevor Karn}
\email{karnx018@umn.edu}
\author{Victor Reiner}
\email{reiner@umn.edu}
\address{School of Mathematics, University of Minnesota}
%\date{\today}
\keywords{invariant theory, exterior, superpolynomial, superspace, wreath, shuffle, signed shuffle}
\subjclass{05E40,13A50, 16W22}
\begin{document}

\begin{abstract}
This paper considers a finite group $G$ acting linearly on the variables $V$ of a polynomial algebra, or an exterior algebra, or a superpolynomial algebra with both commuting and
anticommuting variables.
In this setting, the Hilbert series for the $G$-invariant subalgebra
turns out to determine the analogous Hilbert series for the
wreath product $P[G]$ acting on $V^n$ for any permutation group $P$ inside the symmetric group $\symm_n$ on $n$ letters. 

This leads to a structural result:  one can collate the direct sum for all $n$ of the
$\symm_n[G]$-invariant subalgebras 
to form a graded ring via an external shuffle product,
whose structure turns out to be
a superpolynomial algebra generated by the $G$-invariants.  A parallel statement holds for the direct sum of all $\symm_n[G]$-antiinvariants, which forms a graded ring via an external signed shuffle product, 
isomorphic to the 
%Koszul dual of a superpolynomial ring
superexterior algebra generated by the $G$-invariants. 
\end{abstract}

\maketitle
%\tableofcontents

%%%%%%%%%%%%%%%%%%%%%%%%%%%%
\section{Introduction}
\label{sec:intro}
%%%%%%%%%%%%%%%%%%%%%%%%%%%%

Classical invariant theory of finite groups (see, e.g., Benson \cite{Benson}, Smith \cite{Smith}, Stanley \cite{Stanley-invariants}) starts with
a $\kk$-vector space $V \cong \kk^n$ having basis $x_1,\ldots,x_n$,
and finite subgroup $G$ of $GL(V)$, acting on the {\it symmetric
algebra} $\Sym V \cong \kk[x_1,\ldots,x_n]$. 
It studies the 
structure of
the {\it $G$-invariant subalgebra}
$$
(\Sym V)^G:=\{ f \in \Sym V: g(f(\xx))=f(\xx) \text{ for all }g \in G\},
$$
and more generally, for
any homomorphism $\chi: G \rightarrow \kk^\times$,
the {\it $\chi$-relative invariants of $G$}
$$
(\Sym V)^{G,\chi}:=
\{ f(\xx) \in \Sym V: g(f(\xx))=\chi(g) \cdot f(\xx) \text{ for all }g \in G\}.
$$
It also studies $G$-invariants and $\chi$-relative invariants for the
$G$-action on the {\it exterior algebra}
$\wedge V$, and the ``polynomial tensor exterior" algebra 
$\Sym V \otimes \wedge V$;
see \cite[Chap.~5]{Benson}, \cite[Chap.~9]{Smith}.
When $|G|$ lies in $\kk^\times$, so that $G$ acts semisimply, the theory benefits from versions of {\it Molien's Theorem} predicting the {\it Hilbert series} for the invariants and relative invariants, as averages over $G$.

The goal of this paper is to generalize and build on observations of Solomon \cite{Solomon-partition-identities} and Dias and Stewart \cite{DiasStewart} 
in this context, regarding Hilbert series
for the {\it wreath products} $G \wr \symm_n = \symm_n[G]$ of the {\it symmetric group} $\symm_n$ with $G$, acting on $V^n=V\oplus \cdots \oplus V$ and $\Sym(V^n)$; see
Section~\ref{sec:wreath-Molien}
for the definition of this action.  Dias and Stewart \cite[\S6.2]{DiasStewart} showed
that, for any permutation subgroup $P$ of $\symm_n$, the
Hilbert series of $(\Sym V)^G$ determines that of $(\Sym V^n)^{P[G]}$ in a simple fashion,
while Solomon \cite[\S3]{Solomon-invariants} summed the Hilbert series of $(\Sym V^n)^{\symm_n[G]}$ for all $n=0,1,2,\ldots$ into an elegant product generating function.
Their results and methods anticipate our first two results, Theorems~\ref{thm:wreath-Molien-theorem}, \ref{thm:compiled-wreath-hilbs} below, extending their statements to the following
more general context. 

Assume that the field $\kk$ is not of characteristic two, and consider a $\ZZ_2$-graded $\kk$-vector space $V=V_{\bar{0}} \oplus V_{\bar{1}}$, with $\kk$-bases $x_1,\ldots,x_{r_0}$ for $V_{\bar{0}}$, and  $\theta_1,\ldots,\theta_{r_1}$ for $V_{\bar{1}}$.  We think of
the $\{ x_i\}$ as commuting (``bosonic") polynomial variables,
and the $\{\theta_j\}$ as anticommuting (``fermionic") exterior variables, in the {\it superpolynomial algebra} having these various descriptions:
\begin{align*}
\Sym_\pm V
&\cong \Sym(V_{\bar{0}}) \otimes \wedge(V_{\bar{1}})\\
&\cong \kk[x_1,x_2,\ldots,x_{r_0}] \otimes
\wedge(\theta_1,\theta_2,\ldots,\theta_{r_1})\\
&\cong T(V)/I
\end{align*}
where $T(V)=\bigoplus_{d=0}^\infty T^d(V)$
with $T^d(V)=V^{\otimes d}$ the {\it tensor algebra}, and $I$ is the two-sided ideal of $T(V)$ generated by
the elements  
$a\otimes b-(-1)^{\bar{i} \cdot \bar{j}} \, b \otimes a$
where 
$a \in V_{\bar{i}}$ and $b \in V_{\bar{j}}$
for $\bar{i},\bar{j} \in \ZZ_2$.

Say that a finite subgroup $G$ of $GL(V)$ {\it preserves the $\ZZ_2$-grading} if $G \subseteq {GL(V_{\bar{0}}) \times GL(V_{\bar{1}})}$.
The algebra $\Sym_\pm V$ additionally
has a grading by $\ZZ \times \ZZ=\ZZ^2$
preserved by such groups $G$, defined as follows: for $(i,j) \in \NN^2$ with $\NN =\{0,1,2,\ldots\}$, let
$$
(\Sym_\pm V)_{ij}:=\Sym^i(V_{\bar{0}}) \otimes \wedge^j(V_{\bar{1}})
=\kk[x_1,x_2,\ldots,x_{r_0}]_i \otimes \wedge^j(\theta_1,\theta_2,\ldots,\theta_{r_1}).
$$
Thus the $G$-invariant subalgebra
$(\Sym_\pm V)^G$ has a $\ZZ^2$-graded {\it Hilbert series} in $\ZZ[[q,u]]$ defined by
\begin{equation}
\label{eq:G-invariants-hilb}
\Hilb((\Sym_\pm V)^G,q,u)
:=\sum_{(i,j) \in \NN^2} a_{ij} q^{i} u^{j},
\quad \text{ where }a_{ij}:=\dim_\kk \left( \Sym^i(V_{\bar{0}}) \otimes \wedge^j(V_{\bar{1}}) \right)^G.
\end{equation}
One can then let the wreath product $G \wr \symm_n=\symm_n[G]=\symm_n \ltimes G^n$ act on $V^n$, with
$G^n$ acting independently in each summand of $V^n$ and the subgroup $\symm_n$ permuting the summands; see Section~\ref{sec:wreath-Molien} for more details.
In this setting, any permutation subgroup $P$ of $\symm_n$ inherits an action on $V^n$,
and on $\Sym_\pm V^n$.  One can ask for
the $\ZZ^2$-graded Hilbert series of either the $P[G]$-invariants $(\Sym_\pm V^n)^{P[G]}$, 
or the $P[G]$-antiinvariants $(\Sym_\pm V^n)^{P[G],\sgn}$, where $\sgn$ is this composite homomorphism:
$$
\sgn:P[G] \twoheadrightarrow P \hookrightarrow \symm_n \overset{\sgn}{\longrightarrow} \{\pm 1\}.
$$
It turns out that these Hilbert series are {\it plethystic evaluations} of
the {\it cycle index} and {\it signed cycle index} of $P$,
which are these
elements of the {\it ring of symmetric functions} $\Lambda=\Lambda_\QQ=\QQ[p_1,p_2,\ldots]$:
\begin{align}
\label{eq:cycle-index-defn}
Z_P&:=\frac{1}{|P|}
\sum_{\sigma \in P} p_{\lambda(\sigma)}
=
\frac{1}{|P|}
\sum_{\sigma \in P} \prod_{\substack{\text{ cycles }\\C \text{ of }\sigma}} p_{|C|},\\
\label{eq:signed-cycle-index-defn}
Z^{\sgn}_P&:=\frac{1}{|P|}
\sum_{\sigma \in P} \sgn(\sigma) \cdot p_{\lambda(\sigma)}
=
\frac{1}{|P|}
\sum_{\sigma \in P} \prod_{\substack{\text{ cycles }\\C \text{ of }\sigma}} (-1)^{|C|-1} p_{|C|},
\end{align}
where $\lambda(\sigma)$ is the {\it cycle type partition} of $\sigma$.  
For a fixed power
series $f(q,u) \in \QQ[[q,u]]$,
the operation of {\it plethystic substitution} $h \mapsto h[f(q,u)]$ is the unique
homomorphism $\Lambda=\QQ[p_1,p_2,\ldots] \rightarrow \QQ[[q,u]]$ sending $p_r \mapsto f(q^r,u^r)$ for $r=1,2,\ldots$.
After deriving a straightforward extension of
Molien's Theorem to $\Sym_\pm V$ in Section~\ref{sec:super-Molien}, the following 
result is proven 
in 
Section~\ref{sec:wreath-Molien}.

\begin{thm}
\label{thm:wreath-Molien-theorem}
In the above setting, when $|G|,|P|$ lie in $\kk^\times$, then
\begin{align}
\label{eq:wreath-product-Molien}
\Hilb((\Sym_\pm V^n)^{P[G]},q,-u)
&=Z_P\left[\, \Hilb((\Sym_\pm V)^G,q,-u)\, \right]\\
\label{eq:wreath-product-sign-Molien}
\Hilb((\Sym_\pm V^n)^{P[G],\sgn},q,-u)
&=Z^{\sgn}_P\left[\, \Hilb((\Sym_\pm V)^G,q,-u)\, \right].
\end{align}
\end{thm}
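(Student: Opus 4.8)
The plan is to apply the super-version of Molien's Theorem from Section~\ref{sec:super-Molien} (together with its relative-invariant variant) directly to the group $P[G]$ acting linearly on $V^n$, and then to reorganize the resulting average over $P[G]$ one $\sigma$-cycle at a time until it becomes visibly a plethystic evaluation of $Z_P$ (resp.\ $Z^{\sgn}_P$). Since $G$ preserves the $\ZZ_2$-grading and the subgroup $\symm_n$ merely permutes the $n$ summands, one has $(V^n)_{\bar{0}}=(V_{\bar{0}})^n$ and $(V^n)_{\bar{1}}=(V_{\bar{1}})^n$, with $P[G]$ lying inside $GL((V^n)_{\bar{0}})\times GL((V^n)_{\bar{1}})$, so super-Molien applies and expresses $\Hilb((\Sym_\pm V^n)^{P[G]},q,u)$ as
\[
\frac{1}{|P[G]|}\sum_{w=(\sigma;g_1,\dots,g_n)\in P[G]}\frac{\det\!\left(1+u\cdot w|_{(V_{\bar{1}})^n}\right)}{\det\!\left(1-q\cdot w|_{(V_{\bar{0}})^n}\right)},
\]
and expresses the antiinvariant Hilbert series $\Hilb((\Sym_\pm V^n)^{P[G],\sgn},q,u)$ by the same sum with the extra weight $\sgn(\sigma)$ attached to each summand.

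Next I would carry out the block-matrix computation that underlies the classical cycle-index identities. Writing $w=(\sigma;g_1,\dots,g_n)$ and splitting $V^n$ according to the cycles of $\sigma$, a cycle $C=(i_1\,i_2\,\cdots\,i_\ell)$ of length $\ell$ contributes the block-cyclic operator on $V^{\ell}$ with twists $g_{i_1},\dots,g_{i_\ell}$, and the elementary identity
\[
\det\!\left(1-t\cdot w|_{W^{\ell}}\right)=\det\!\left(1-t^{\ell}\cdot g_C|_{W}\right),
\]
valid over any field (a Schur-complement computation), where $g_C$ is the product of the $g_{i_j}$ around the cycle, well-defined up to conjugacy. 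Taking $t=q$ handles the bosonic factor, and taking $t=-u$ handles the fermionic one, giving $\det(1+u\cdot w|_{(V_{\bar{1}})^{\ell}})=\det(1-(-u)^{\ell}g_C|_{V_{\bar{1}}})$. The stray $(-1)^{\ell}$ hiding inside $(-u)^{\ell}$ is exactly what the substitution $u\mapsto-u$ in the statement is there to absorb: after performing it, the $C$-factor of the summand becomes simply $\det(1-u^{\ell}g_C|_{V_{\bar{1}}})/\det(1-q^{\ell}g_C|_{V_{\bar{0}}})$, which the super-Molien formula for $G$ itself identifies as the $g_C$-indexed term of $\Hilb((\Sym_\pm V)^G,q^{\ell},-u^{\ell})$.

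The final step is the resummation over $G^n$. For a fixed $\sigma\in P$ the coordinates $g_i$ belonging to distinct cycles of $\sigma$ vary independently, while within a cycle $C$ of length $\ell$ the assignment $g_{i_\ell}\mapsto g_C$ is, for any fixed values of the remaining $g_i$, a bijection of $G$; hence averaging the $C$-factor over the $|G|^{\ell}$ tuples $(g_i:i\in C)$ collapses to an average over a single copy of $G$ and produces exactly $\Hilb((\Sym_\pm V)^G,q^{\ell},-u^{\ell})=p_{\ell}\big[\Hilb((\Sym_\pm V)^G,q,-u)\big]$. Multiplying over the cycles of $\sigma$ and using that plethystic substitution is a ring homomorphism, the contribution of $\sigma$ becomes $p_{\lambda(\sigma)}\big[\Hilb((\Sym_\pm V)^G,q,-u)\big]$; averaging over $\sigma\in P$ and recalling the definition \eqref{eq:cycle-index-defn} of $Z_P$ identifies the whole sum with $Z_P[\Hilb((\Sym_\pm V)^G,q,-u)]$, which is \eqref{eq:wreath-product-Molien}. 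Carrying the weight $\sgn(\sigma)$ through the same computation and recalling \eqref{eq:signed-cycle-index-defn} gives \eqref{eq:wreath-product-sign-Molien}.

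The only genuine obstacle I anticipate is the sign bookkeeping in the fermionic part: one must keep track of the $(-1)^{\ell}$ produced by rewriting $\det(1+u\cdot w)=\det(1-(-u)\cdot w)$ in the block-cyclic computation, and one should also check that the $\symm_n$-action on the anticommuting generators of $\Sym_\pm V^n$ — which acquires Koszul signs when $\Sym_\pm V^n$ is viewed as a super-tensor power of $\Sym_\pm V$ — is the same functorial linear action on $(V_{\bar{1}})^n$ that Molien's Theorem actually sees, so that those signs do not alter the dimensions being counted and everything stays consistent with the $u\mapsto-u$ built into the statement. The remaining ingredients, namely the block-matrix determinant identity and the bijective resummation over $G^n$, are routine and proceed exactly as in the purely bosonic case $V=V_{\bar{0}}$ treated by Dias--Stewart \cite{DiasStewart} and Solomon \cite{Solomon-invariants}.
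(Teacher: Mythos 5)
Your proposal is correct and follows essentially the same route as the paper: apply the super Molien formula (Proposition~\ref{prop:super-molien}) to $P[G]$, factor each summand over the cycles of $\sigma$ using the block-cyclic determinant identity (your Schur-complement step is exactly Lemma~\ref{lem:block-cycle-row-reduction}), collapse the average over $G^n$ cycle-by-cycle via the bijection $g_{i_\ell}\mapsto g_C$, and recognize each cycle factor as $p_\ell\bigl[\Hilb((\Sym_\pm V)^G,q,-u)\bigr]$. Your sign caveat is harmless: the $\symm_n$-action on $\Sym_\pm V^n$ is by definition the functorial extension of the linear permutation action on $V^n=V_{\bar 0}^n\oplus V_{\bar 1}^n$, which is precisely what Molien's theorem computes, and the $u\mapsto -u$ substitution absorbs the $(-1)^\ell$ exactly as you describe.
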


Section~\ref{sec:compiled-hilb-section}
compiles the cases where $P=\symm_n$ for $n=0,1,2,\ldots$ into generating functions as follows.

\begin{thm}
\label{thm:compiled-wreath-hilbs}
In the above setting, when $\chr{\kk}=0$, one has these generating functions:
\begin{align}
\label{eq:hilb-of-shuffle-algebra}
\sum_{n=0}^\infty
t^n 
\Hilb((\Sym_\pm V^n)^{\symm_n[G]},q,u)
&=
\displaystyle
\prod_{\substack{(i,j) \in \NN^2:\\j \text{ odd}}}
(1+t q^{i} u^{j} )^{a_{ij}}
\bigg/
\displaystyle
\prod_{\substack{(i,j) \in \NN^2:\\j \text{ even}}}
(1-t q^{i} u^{j} )^{a_{ij}}\\
\label{eq:hilb-of-signed-shuffle-algebra}
\sum_{n=0}^\infty
t^n 
\Hilb((\Sym_\pm V^n)^{\symm_n[G],\sgn},q,u)
&=
\displaystyle
\prod_{\substack{(i,j) \in \NN^2:\\j \text{ even}}}
(1+t q^{i} u^{j} )^{a_{ij}}
\bigg/
\displaystyle
\prod_{\substack{(i,j) \in \NN^2:\\j \text{ odd}}}
(1-t q^{i} u^{j} )^{a_{ij}}.
\end{align}
\end{thm}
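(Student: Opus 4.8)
The plan is to deduce Theorem~\ref{thm:compiled-wreath-hilbs} from Theorem~\ref{thm:wreath-Molien-theorem} by feeding a classical generating function for the cycle indices $Z_{\symm_n}$ into the plethysm appearing there. The starting point is the pair of identities in $\Lambda[[t]]$,
$$
\sum_{n \geq 0} t^n Z_{\symm_n} = \exp\!\Big( \sum_{r \geq 1} \tfrac{t^r}{r}\, p_r \Big),
\qquad
\sum_{n \geq 0} t^n Z^{\sgn}_{\symm_n} = \exp\!\Big( \sum_{r \geq 1} \tfrac{(-1)^{r-1}}{r}\, t^r p_r \Big),
$$
which are the well-known facts $Z_{\symm_n}=h_n$ and $Z^{\sgn}_{\symm_n}=e_n$ combined with the power-sum expansions $\sum_n h_n = \exp\big(\sum_r p_r/r\big)$ and $\sum_n e_n = \exp\big(\sum_r (-1)^{r-1}p_r/r\big)$.

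Because $\chr\kk = 0$, the orders $|G|$ and $|\symm_n|$ lie in $\kk^\times$ for all $n$, so \eqref{eq:wreath-product-Molien} applies with $P=\symm_n$ for every $n$. Set $g(q,u) := \Hilb((\Sym_\pm V)^G, q, -u) = \sum_{(i,j)\in\NN^2} a_{ij}(-1)^j q^i u^j$. The plethystic substitution $h \mapsto h[g(q,u)]$ is a ring homomorphism $\Lambda \to \QQ[[q,u]]$ sending $p_r \mapsto g(q^r,u^r) = \sum_{ij}a_{ij}(-1)^j (q^iu^j)^r$; extending it $t$-coefficientwise and applying it to the first identity above, while using \eqref{eq:wreath-product-Molien}, gives
$$
\sum_{n \geq 0} t^n \Hilb((\Sym_\pm V^n)^{\symm_n[G]}, q, -u)
= \exp\!\Big( \sum_{r\geq 1}\tfrac{t^r}{r}\sum_{ij} a_{ij}(-1)^j (q^iu^j)^r\Big)
= \prod_{(i,j)\in\NN^2} (1 - t q^i u^j)^{(-1)^{j+1} a_{ij}},
$$
where the last equality interchanges the two sums and uses $\sum_{r\geq1} z^r/r = -\log(1-z)$ followed by exponentiation. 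Finally substituting $u\mapsto -u$ converts the left side into $\sum_n t^n\Hilb((\Sym_\pm V^n)^{\symm_n[G]},q,u)$, while on the right each factor $1 - tq^i(-u)^j$ becomes $1 + tq^iu^j$ when $j$ is odd and $1 - tq^iu^j$ when $j$ is even, yielding exactly \eqref{eq:hilb-of-shuffle-algebra}. The signed statement \eqref{eq:hilb-of-signed-shuffle-algebra} is obtained by running the same computation from the second identity: the extra factor $(-1)^{r-1}$ replaces $-\log(1-z)$ by $\log(1+z)$, so the roles of even and odd $j$ get swapped.

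The computation is short, so the only things requiring care — and where I expect to spend the most words — are verifying that all the formal manipulations (the sum over $n$, the passage through $\log$ and $\exp$, and the regrouping of the product) are legitimate in $\QQ[[q,u,t]]$, which holds since each monomial $t^nq^iu^j$ picks up only finitely many contributions, and tracking the three independent sign sources: the $(-1)^j$ from evaluating $\Hilb$ at $-u$ inside the plethysm, the $(-1)^{r-1}$ distinguishing $Z^{\sgn}_{\symm_n}$ from $Z_{\symm_n}$, and the final substitution $u\mapsto -u$ restoring the natural grading variable.
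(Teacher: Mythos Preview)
Your proposal is correct and follows essentially the same route as the paper: both start from the exponential generating functions $\sum_n t^n Z_{\symm_n}=\exp(\sum_r p_r t^r/r)$ and its signed analogue, apply the plethystic substitution $p_r\mapsto \Hilb((\Sym_\pm V)^G,q^r,-u^r)$ coming from Theorem~\ref{thm:wreath-Molien-theorem}, convert the resulting $\exp$--$\log$ expression into an infinite product, and then undo the $u\mapsto -u$ substitution at the end. Your version adds a brief justification of the formal-series manipulations and a more explicit sign bookkeeping, but the argument is otherwise the same as the paper's.
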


The form of the two products on the right sides in
\eqref{eq:hilb-of-shuffle-algebra}, \eqref{eq:hilb-of-signed-shuffle-algebra} above
%Theorem~\ref{thm:compiled-wreath-hilbs}
are suggestive of our main result, Theorem~\ref{thm:shuffle-algebra-is-superpolynomial} below, which is proven in Section~\ref{sec:shuffles} and explained here.
It is natural to view
$(\Sym_\pm V)^G$ as a $\ZZ_2$-graded
$\kk$-vector space $U$ as follows:
\begin{equation}
\label{eq:ensuing-Z2-grading}
U:=(\Sym_\pm V)^G
= \underbrace{
\bigoplus_{\substack{(i,j) \in \NN^2:\\j\text{ even}}}
(\Sym_\pm V)_{ij}^G}_{U_{\bar{0}}
%=((\Sym_\pm V)^G)_{\bar{0}}
} 
\quad \oplus \quad
\underbrace{\bigoplus_{\substack{(i,j) \in \NN^2:\\j\text{ odd}}}
(\Sym_\pm V)_{ij}^G}_{U_{\bar{1}}
%=((\Sym_\pm V)^G)_{\bar{1}}
}. 
\end{equation}
Comparing this with the right side of \eqref{eq:hilb-of-shuffle-algebra}, one sees that the latter product is the $\ZZ^3$-graded Hilbert series $\Hilb(\Sym_\pm U, t,q,u)$ of the superpolynomial algebra $\Sym_\pm U =\Sym_\pm \left( (\Sym_\pm V)^G \right)$, in which
$$
a_{ij}=\dim_\kk (\Sym_\pm U)_{1,i,j}=\dim_\kk U_{ij}=\dim_\kk (\Sym_\pm V)_{ij}^G
$$
appears as the coefficient of $tq^i u^j$.
To similarly interpret the right side of \eqref{eq:hilb-of-signed-shuffle-algebra}, one can consider the {\it superexterior algebra on $U$} (see Manin \cite[\S3.2.6]{Manin}), expressible as a quotient
$$
\wedge_\pm U :=T(U)/J
$$
where $T(U)$ is the tensor algebra and $J$ is the two-sided ideal generated by all 
$a \otimes b+(-1)^{\bar{i} \cdot \bar{j}} \, b \otimes a$ where 
$a \in V_{\bar{i}}$ and $b \in V_{\bar{j}}$
for $\bar{i},\bar{j} \in \ZZ_2$.
One then sees that product on the right side of \eqref{eq:hilb-of-signed-shuffle-algebra} is the
$\ZZ^3$-graded Hilbert series $\Hilb(\wedge_\pm U,t,q,u)$
for $U=(\Sym_\pm V)^G=U_{\bar{0}} \oplus U_{\bar{1}}$ as in \eqref{eq:ensuing-Z2-grading}. 
The form of the left sides of \eqref{eq:hilb-of-shuffle-algebra}, \eqref{eq:hilb-of-signed-shuffle-algebra} then motivate defining algebra structures on these $\ZZ^3$-graded $\kk$-vector spaces:
\begin{align}
 \label{eq:G-shuffle-algebra-defn}  
R_G&:=\bigoplus_{n=0}^\infty
(R_G)_n \text{ where }(R_G)_n:=
(\Sym_\pm V^n)^{\symm_n[G]},\\
&=\bigoplus_{(n,i,j) \in \NN^3}
(R_G)_{nij} \text{ where }(R_G)_{nij}:=
(\Sym_\pm V^n)_{ij}^{\symm_n[G]},\\
\label{eq:G-signed-shuffle-algebra-defn} 
R^{\sgn}_G&:=\bigoplus_{n=0}^\infty (R^{\sgn}_G)_n \text{ where }
(R^{\sgn}_G)_n :=(\Sym_\pm V^n)^{\symm_n[G],\sgn}\\
&=\bigoplus_{(n,i,j) \in \NN^3}
(R^{\sgn}_G)_{nij} \text{ where }(R_G)_{nij}:=
(\Sym_\pm V^n)_{ij}^{\symm_n[G],\sgn}.
\end{align}
Section~\ref{sec:shuffles} 
defines a graded (external) {\it shuffle product} $\shuffle:(R_G)_a \otimes (R_G)_b \rightarrow (R_G)_{a+b}$ on $R_G$, and 
{\it signed shuffle product}  $\shuffle^{\sgn}: (R^{\sgn}_G)_a \otimes (R^{\sgn}_G)_b \rightarrow (R^{\sgn}_G)_{a+b}$ on $R_G^{\sgn}$, in order to prove the following.

\begin{thm}
\label{thm:shuffle-algebra-is-superpolynomial}
In the context of Theorem~\ref{thm:compiled-wreath-hilbs},
one has $\ZZ^3$-graded algebra isomorphisms
\begin{align}
\label{eq:shuffle-algebra-isomorphism}
\Sym_\pm \left( (\Sym_\pm V)^G \right)
& \cong R_G \text{ with its }\shuffle\text{ product},\\
\label{eq:signed-shuffle-algebra-isomorphism}
\wedge_\pm\left( (\Sym_\pm V)^G \right)
& \cong R^{\sgn}_G \text{ with its  }\shuffle^{\sgn}\text{ product}.
\end{align}
\end{thm}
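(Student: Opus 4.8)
The plan is to construct the two isomorphisms directly from universal properties, invoking Theorem~\ref{thm:compiled-wreath-hilbs} only at the end to promote surjectivity to bijectivity. Write $U:=(\Sym_\pm V)^G$ with its $\ZZ_2$-grading as in \eqref{eq:ensuing-Z2-grading}; since $\symm_1$ is trivial, $U=(R_G)_1=(R^{\sgn}_G)_1$, with each summand $U_{ij}$ sitting in $\ZZ^3$-degree $(1,i,j)$. As noted after Theorem~\ref{thm:compiled-wreath-hilbs}, the products on the right of \eqref{eq:hilb-of-shuffle-algebra} and \eqref{eq:hilb-of-signed-shuffle-algebra} are exactly $\Hilb(\Sym_\pm U,t,q,u)$ and $\Hilb(\wedge_\pm U,t,q,u)$, so those Hilbert series agree with those of $R_G$ and $R^{\sgn}_G$; moreover every $\ZZ^3$-graded piece of each of these four objects is finite-dimensional. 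It therefore suffices to produce \emph{surjective} $\ZZ^3$-graded algebra maps $\Phi\colon\Sym_\pm U\to R_G$ and $\Psi\colon\wedge_\pm U\to R^{\sgn}_G$ restricting to the canonical inclusion of $U$ in degree one; such maps are automatically isomorphisms.

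First I would record the structural input. The exponential property of superpolynomial algebras gives a $\symm_n[G]$-equivariant isomorphism of $\ZZ^2$-graded superalgebras $\Sym_\pm(V^n)\cong(\Sym_\pm V)^{\otimes n}$ (super tensor product), in which $G^n$ acts factorwise and $\symm_n$ permutes the tensor factors \emph{with Koszul signs}, determined by the $\bar j$-parities. Taking $G^n$-invariants and then, respectively, $\symm_n$-invariants or the $\symm_n$-$\sgn$-isotypic component identifies $(R_G)_n\cong(U^{\otimes n})^{\symm_n}$ and $(R^{\sgn}_G)_n\cong\{w\in U^{\otimes n}:\sigma\cdot w=\sgn(\sigma)\,w\ \text{for all}\ \sigma\in\symm_n\}$, the $\symm_n$-action on $U^{\otimes n}$ always being the Koszul one. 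Unwinding the definitions of $\shuffle$ and $\shuffle^{\sgn}$ from Section~\ref{sec:shuffles} on degree-one elements then yields, for homogeneous $a\in U_{\bar i}$ and $b\in U_{\bar j}$, the formulas $a\shuffle b=a\otimes b+(-1)^{\bar i\cdot\bar j}\,b\otimes a$ and $a\shuffle^{\sgn}b=a\otimes b-(-1)^{\bar i\cdot\bar j}\,b\otimes a$. Hence the elements of $U$ supercommute under $\shuffle$ (with $a\shuffle a=0$ when $\bar i=\bar1$), and under $\shuffle^{\sgn}$ they satisfy $a\shuffle^{\sgn}b+(-1)^{\bar i\cdot\bar j}\,b\shuffle^{\sgn}a=0$, exactly the defining relation of $\wedge_\pm U$. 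Since $\chr\kk\ne2$, the universal property of the free supercommutative algebra extends the inclusion $U\hookrightarrow(R_G,\shuffle)$ to $\Phi$, and Manin's presentation \cite[\S3.2.6]{Manin} of the superexterior algebra extends $U\hookrightarrow(R^{\sgn}_G,\shuffle^{\sgn})$ to $\Psi$; both are $\ZZ^3$-graded algebra homomorphisms.

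Next I would prove surjectivity by showing $R_G$ and $R^{\sgn}_G$ are generated in $n$-degree one. Iterating the two formulas above, and using repeatedly that each permutation in $\symm_n$ factors uniquely as an $(a,b)$-shuffle times an element of $\symm_a\times\symm_b$, gives for $u_1,\dots,u_n\in U$ the identities $u_1\shuffle\cdots\shuffle u_n=\sum_{\sigma\in\symm_n}\sigma\cdot(u_1\otimes\cdots\otimes u_n)=n!\,\pi_n(u_1\otimes\cdots\otimes u_n)$ and $u_1\shuffle^{\sgn}\cdots\shuffle^{\sgn}u_n=\sum_{\sigma\in\symm_n}\sgn(\sigma)\,\sigma\cdot(u_1\otimes\cdots\otimes u_n)=n!\,\pi^{\sgn}_n(u_1\otimes\cdots\otimes u_n)$, where $\pi_n$ and $\pi^{\sgn}_n$ are the Koszul (anti)symmetrizing idempotents of $U^{\otimes n}$. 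In characteristic zero these idempotents map $U^{\otimes n}$ onto $(R_G)_n$ and $(R^{\sgn}_G)_n$, and the pure tensors span $U^{\otimes n}$, so iterated (signed) shuffles of degree-one elements span each graded piece. Thus $\Phi$ and $\Psi$ are surjective, hence isomorphisms by the finite-dimensional Hilbert-series comparison, proving \eqref{eq:shuffle-algebra-isomorphism} and \eqref{eq:signed-shuffle-algebra-isomorphism}.

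The part I expect to be the main obstacle is not the skeleton above but the sign bookkeeping it conceals: one must nail down the conventions for $\shuffle$, for $\shuffle^{\sgn}$, and for the Koszul action on $(\Sym_\pm V)^{\otimes n}$ so that they genuinely produce the formulas $a\shuffle b=a\otimes b+(-1)^{\bar i\bar j}b\otimes a$ and $a\shuffle^{\sgn}b=a\otimes b-(-1)^{\bar i\bar j}b\otimes a$, and --- just as importantly --- so that the iterated product of $n$ degree-one elements is $n!$ times the \emph{correctly signed} super-(anti)symmetrizer, with signs agreeing verbatim with the presentations of $\Sym_\pm U$ and $\wedge_\pm U$. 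Once the constructions of Section~\ref{sec:shuffles} are seen to be the ``right'' ones in this sense, the remaining ingredients --- associativity, unitality, and $\ZZ^3$-gradedness of $\shuffle$ and $\shuffle^{\sgn}$, together with the characteristic-zero linear algebra of group-averaging idempotents --- are routine.
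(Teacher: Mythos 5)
Your proposal is correct and takes essentially the same route as the paper: show that $R_G$ and $R^{\sgn}_G$ are associative and generated in degree one, that degree-one elements satisfy the defining quadratic relations of $\Sym_\pm U$ resp.\ $\wedge_\pm U$ for $U=(\Sym_\pm V)^G$, and then upgrade the resulting graded surjections to isomorphisms via the Hilbert-series identities of Theorem~\ref{thm:compiled-wreath-hilbs} --- exactly the roles played by Propositions~\ref{prop:shuffle-product-well-defined}, \ref{prop:degree-one-generation} and \ref{prop:quadratic-relations}. The only packaging differences are that you treat general $G$ in one pass via the identification $(\Sym_\pm V^n)^{G^n}\cong U^{\otimes n}$ with its Koszul $\symm_n$-action and the characteristic-zero (anti)symmetrizers, whereas the paper proves generation and the relations for $G=\{1_V\}$ with explicit symmetrized monomials and then restricts the resulting isomorphism, and that the associativity you defer as ``routine'' is precisely Proposition~\ref{prop:shuffle-product-well-defined}, proved from the same length-additive coset factorization you invoke for surjectivity.
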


We close this introduction with some
examples and further context.
\begin{example}
\label{ex:motivating-Thibon-example}
An example which arose in work of the first author \cite{Karn-thesis} concerns the anticommutative case, where 
$V=V_{\bar{1}}=\kk^r$, so that $\Sym_\pm V = \wedge V =\wedge(\theta_1,\ldots,\theta_r)$.
Consider the action of 
$$
G=\symm_\alpha:=\symm_{\alpha_1} \times \cdots \times \symm_{\alpha_\ell},
$$
a {\it Young subgroup} of the symmetric group $\symm_r$, indexed by any composition 
$\alpha=(\alpha_1,\ldots,\alpha_\ell)$ of $r$.
In examining the $\symm_\alpha$-invariants
$(\Sym_{\pm} V)^{\symm_\alpha}=(\wedge V)^{\symm_\alpha}$,
and more generally, 
$$
(\Sym_{\pm} V^n)^{\symm_n[\symm_\alpha]}=(\wedge V^n)^{\symm_n[\symm_\alpha]},
$$
it was observed that their Hilbert series
seemed to depend only on the length $\ell$ of $\alpha$, not on
$\alpha$ itself.  

This can be explained as follows. When the full symmetric group $\symm_r$ acts on 
$\wedge V=\wedge(\theta_1,\ldots,\theta_r)$, 
one readily sees that the
$\symm_r$-invariants $(\wedge V)^{\symm_r}$ are $\kk$-spanned by the two elements
$1 \in \wedge^0 V=\kk$ and $\theta_1+\cdots+\theta_r \in \wedge^1 V=V$.
This lets one easily analyze the action of $\symm_\alpha$ acting on $\wedge V$.
The direct sum decomposition
$V=V^{\alpha_1} \oplus \cdots \oplus V^{\alpha_\ell}$ gives rise to a graded tensor product decomposition
$$
\wedge V \cong \wedge V^{\alpha_1} \otimes \cdots \otimes  \wedge V^{\alpha_\ell}
$$
compatible with the action of  $\symm_\alpha=\symm_{\alpha_1} \times \cdots \times \symm_{\alpha_\ell}$ on each tensor factor.
This gives an isomorphism
\begin{align*}
(\wedge V)^{\symm_\alpha} 
 &\cong 
 (\wedge V^{\alpha_1})^{\symm_{\alpha_1}} \otimes \cdots \otimes  (\wedge V^{\alpha_\ell})^{\symm_{\alpha_\ell}}
 \cong \wedge( \theta^{(1)}, \ldots, \theta^{(\ell)}),
 \end{align*}
 where $\theta^{(i)}$ is the sum of
 the $\alpha_i$ exterior variables in degree one of $\wedge V^{\alpha_i}$.
Consequently,
$$
\Hilb((\wedge V)^{\symm_\alpha},u)
=
\Hilb
\left( \wedge
(\theta^{(1)}, \ldots, \theta^{(\ell)}), u 
\right)=
\sum_{j=0}^\ell \binom{\ell}{j} u^j, 
$$
depending only upon $\ell$.  Using this,
Theorem~\ref{thm:compiled-wreath-hilbs} then compiles the
$\symm_n[\symm_\alpha]$-invariants Hilbert series:
\begin{equation}
\label{eq:Thibon-calculation}
\sum_{n=0}^\infty t^n 
\Hilb((\wedge V^n)^{\symm_n[\symm_\alpha]},u)
=
\displaystyle\prod_{\substack{1 \leq j \leq \ell\\j \text{ odd}}} (1+tu^j)^{ \binom{\ell}{j} }
\bigg/
\displaystyle\prod_{\substack{0 \leq j \leq \ell\\j \text{ even}}} (1-tu^j)^{ \binom{\ell}{j} }.
\end{equation}
The authors are grateful to J.-Y. Thibon
for deriving \eqref{eq:Thibon-calculation}
via plethystic calculation (personal communication, April 2024),
when asked if the left side had a nice generating function.  %Theorem~\ref{thm:shuffle-algebra-is-superpolynomial} reveals that \eqref{eq:Thibon-calculation} underlies the superpolynomial structure for the shuffle algebra on $R_{\symm_\alpha}$.
\end{example}

\begin{example}
\label{ex:diagonally-symmetric-example}
The case where $G=\{1_V\}$ acts trivially on
$V=V_{\bar{0}} \oplus V_{\bar{1}}$
is already of some interest.
Given the $\kk$-bases $x_1,x_2,\ldots,x_{r_0}$
for $V_{\bar{0}}$
and $\theta_1,\theta_2,\ldots,\theta_{r_1}$
for $V_{\bar{1}}$,
one can index $\kk$-bases for $V^n=V_{\bar{0}}^n 
\oplus V_{\bar{1}}^n$
with $n \times r_0$ and $n \times r_1$ matrices $X_{n,r_0}, \Theta_{n,r_0}$ of variables, containing $r_0$ sets of $n$ commuting variables and  $r_1$ sets of $n$ anticommuting variables:
\begin{equation}
\label{eq:matrices-of-variables}
X_{n,r_0}=\left[\begin{matrix}
    x_{11} & \cdots & x_{1r_0}\\
    x_{21} & \cdots & x_{2r_0}\\
    \vdots & \vdots & \vdots \\
       x_{n1} & \cdots & x_{nr_0}\\
\end{matrix}
\right],
\quad 
\Theta_{n,r_0}=\left[\begin{matrix}
    \theta_{11} & \cdots & \theta_{1r_1}\\
    \theta_{21} & \cdots & \theta_{2r_1}\\
    \vdots & \vdots & \vdots \\
       \theta_{n1} & \cdots & \theta_{nr_1}\\
\end{matrix}
\right]
\end{equation}
Then one can regard 
$
\Sym_\pm V^n=\Sym_\pm(X_{n,r_0},\Theta_{n,r_1})
$
as the superpolynomial algebra  on these commuting and anticommuting variable sets.
Here $\symm_n(=\symm_n[G])$ acts on the matrix variable sets by permuting the row indices.  The $\symm_n$-invariant subalgebra $\Sym_\pm(X_{n,r_0},\Theta_{n,r_1})^{\symm_n}$ is often called 
the {\it ring of diagonally symmetric functions} in several variable sets of size $n$, with $r_0$ sets of {\it bosonic} variables and 
$r_1$ sets of {\it fermionic} variables; see  Bergeron \cite{Bergeron-bosonic-fermionic, Bergeron-bosonic}, Orellana and Zabrocki \cite[\S 4]{OrellanaZabrocki}.

Note that when $n=1$, one has for $\Sym_\pm V = 
\Sym_\pm(x_1,\ldots,x_{r_0},\theta_1,\ldots,\theta_{r_1})$ that
$$
\Hilb( \Sym_\pm V, q, u)
=\frac{(1+u)^{r_1}}{(1-q)^{r_0}}
=\sum_{\substack{i \geq 0,\\0 \leq j \leq r_1}} 
\underbrace{\multichoose{r_0}{i}
\binom{r_1}{j}}_{a_{ij}} q^i u^j,
$$
where $\multichoose{r_0}{i}:=\binom{r_0+i-1}{i}$ counts $i$-multisubsets of $\{1,2,\ldots,r_0\}$. Thus Theorem~\ref{thm:compiled-wreath-hilbs} implies here that
\begin{align}
\label{eq:diagonally-symmetric-hilb}
\sum_{n=0}^\infty t^n
\Hilb( \Sym_\pm(X_{n,r_0},\Theta_{n,r_1})^{\symm_n}, q, u)
& =\prod_{
\substack{i \geq 0,\\
1 \leq j \leq r_1\\j\text{ odd}}} 
(1+t q^i u^j)^{\multichoose{r_0}{i} \binom{r_1}{j}}
\bigg/
\prod_{
\substack{i \geq 0,\\
0 \leq j \leq r_1\\j\text{ even}}} 
(1-t q^i u^j)^{\multichoose{r_0}{i} \binom{r_1}{j}},\\
\label{eq:diagonally-antisymmetric-hilb}
\sum_{n=0}^\infty t^n
\Hilb( \Sym_\pm(X_{n,r_0},\Theta_{n,r_1})^{\symm_n,\sgn}, q, u)
& =\prod_{
\substack{i \geq 0,\\
0 \leq j \leq r_1\\j\text{ even}}} 
(1+t q^i u^j)^{\multichoose{r_0}{i} \binom{r_1}{j}}
\bigg/
\prod_{
\substack{i \geq 0,\\
1 \leq j \leq r_1\\j\text{ odd}}} 
(1-t q^i u^j)^{\multichoose{r_0}{i} \binom{r_1}{j}}.
\end{align}
It is worth noting that \eqref{eq:diagonally-symmetric-hilb} is consistent with a basis for $\Sym_\pm(X_{n,r_0},\Theta_{n,r_1})^{\symm_n}$ constructed by Orellana and Zabrocki in \cite[Cor.~13]{OrellanaZabrocki}; see also their excellent historical discussion.

Specializing to $r_1=0$,
one is considering the diagonal
action of $\symm_n$ on  
the commutative polynomial algebra $\Sym_\pm(V^n)=\Sym[X_{n,r_0}]$ in the $r_0$ sets of $n$ variables $X_{n,r_0}$ from \eqref{eq:matrices-of-variables}.  Here the
$\symm_n$-invariant subalgebra $\Sym[X_{n,r_0}]^{\symm_n}$ is sometimes called the ring of  {\it multisymmetric} or
{\it MacMahon symmetric polynomials};
see, e.g., Gessel \cite[Part II]{Gessel}, Olver and Shakiban \cite[\S2]{OlverShakiban}, Vaccarino \cite{Vaccarino}. 
The Hilbert series of $\Sym[X_{n,r_0}]^{\symm_n}$ relates to generating functions for {\it vector} or {\it multipartite partitions}, as discussed in Andrews \cite[Chap.~12]{Andrews}. 
One of motivations in the work of Solomon \cite{Solomon-partition-identities} was to use the connection with invariant theory in order to explain combinatorial results of Wright and Gordon on multipartite partitions; see also the discussion in Andrews \cite[pp. 210-211]{Andrews}.
\end{example}

\begin{example}
\label{ex:superspace}
Specializing Example~\ref{ex:diagonally-symmetric-example}
further to
$r_0=r_1=1$ is still interesting.  Here
one has 
\begin{align*}
\Sym_\pm V^n=\Sym_{\pm}(X_{n,r_0},\Theta_{n,r_1})
&=\Sym_\pm (x_1,\ldots,x_n,\theta_1,\ldots,\theta_n)=:\Sym_\pm(\xx,\theta)\\
&=\Sym(x_1,\ldots,x_n) \otimes \wedge(\theta_1,\ldots,\theta_n),
\end{align*}
sometimes called the {\it coordinate ring of superspace}, or the ring of {\it differential forms on $\kk^n$ with polynomial coefficients}.
Here
$\symm_n$ acts
diagonally permuting subscripts.
The diagonally symmetric functions
$\Sym_\pm(\xx,\theta)^{\symm_n}$
and its various $\kk$-bases have been studied as {\it symmetric functions in superspace} by Desrosiers, Lapointe, Mathieu and collaborators; see, e.g., \cite{DesrosiersLapointeMathieu}.

In this context, equations
\eqref{eq:diagonally-symmetric-hilb},
\eqref{eq:diagonally-antisymmetric-hilb} specialized to $r_0=r_1=1$ assert the following:
\begin{align}
\notag
\sum_{n=0}^\infty 
t^n 
\Hilb(\Sym_\pm(\xx,\theta)^{\symm_n},q,u)
&=\prod_{i=0}^\infty \frac{%\prod_{i=0}^\infty
(1+tuq^i)}
{%\prod_{i=0}^\infty
(1-tq^i)}\\
\label{eq:recovering-typeA-Solomon}
&=\sum_{n=0}^\infty t^n
\frac{(1+u)(1+qu)\cdots (1+q^{n-1}u)}
{(1-q)(1-q^2)\cdots (1-q^n)},\\
\notag
\sum_{n=0}^\infty 
t^n 
\Hilb(\Sym_\pm(\xx,\theta)^{\symm_n,\sgn},q,u)
&=\prod_{i=0}^\infty\frac{%\prod_{i=0}^\infty
(1+tq^i)}
{%\prod_{i=0}^\infty
(1-tuq^i)}\\
\label{eq:recovering-typeA-Shepler}
&=\sum_{n=0}^\infty t^n
\frac{(u+1)(u+q)\cdots (u+q^{n-1})}
{(1-q)(1-q^2)\cdots (1-q^n)}.
\end{align}
Here the rightmost equalities in both \eqref{eq:recovering-typeA-Solomon},
\eqref{eq:recovering-typeA-Shepler}
applied the {\it $q$-binomial theorem}\footnote{
The rightmost equality in \eqref{eq:recovering-typeA-Solomon} applied \eqref{eq:q-binomial-theorem} with $a=-u$; \eqref{eq:recovering-typeA-Shepler} applied \eqref{eq:q-binomial-theorem} with $a=-u^{-1}$
followed by $t \mapsto ty$.}
\cite[eqn. (20)]{GasperRahman}
\begin{equation}
\label{eq:q-binomial-theorem}
\frac{(at;q)_\infty}{(t;q)_\infty}=
\sum_{n=0}^\infty t^n \frac{(a;q)_n}{(q;q)_n},
\end{equation}
where $(a;q)_n:=(1-a)(1-qa)\cdots(1-q^{n-1} a)=\prod_{i=0}^{n-1} (1-aq^i)$ and $(a;q)_\infty:=\prod_{i=0}^\infty (1-aq^i)$.  

Meanwhile,
equating coefficients of $t^n$
on the far left and far right sides
in \eqref{eq:recovering-typeA-Solomon}, \eqref{eq:recovering-typeA-Shepler} predicts that 
$\Sym_\pm(\xx,\theta)$
should have these Hilbert series for its $\symm_n$-invariants and $\symm_n$-antiinvariants:
\begin{align}
\label{eq:typeA-Solomon}
\Hilb(\Sym_\pm(\xx,\theta)^{\symm_n},q,u)
&=
\prod_{i=1}^n \frac{1+q^{i-1}u}
{1-q^i},\\
\label{eq:typeA-Shepler}
\Hilb(\Sym_\pm(\xx,\theta)^{\symm_n,\sgn},q,u)
&=
\prod_{i=1}^n \frac{u+q^{i-1}}
{1-q^i}.
\end{align}
These are consistent with the following known results of Solomon and of Shepler 
on $\symm_n$
thought of as a {\it reflection group}.
Earlier work of Solomon \cite{Solomon-invariants} (see also Kane \cite[\S 22]{Kane}, Smith \cite[\S9.3]{Smith}) shows that 
$\Sym_\pm(\xx,\theta)^{\symm_n}$ may be regarded as an {\it exterior algebra} with
coefficients in the (commutative) ring
of {\it symmetric polynomials}
$$
\kk[\xx]^{\symm_n}=\kk[x_1,\ldots,x_n]^{\symm_n}=\kk[e_1(\xx),\ldots,e_n(\xx)].
$$
Here $e_i=e_i(\xx)$ is the $i^{th}$ {\it elementary symmetric function} in $x_1,\ldots,x_n$, with $\deg(e_i)=i$.  Hence
$\Hilb(\kk[\xx]^{\symm_n},q)=\prod_{i=1}^n \frac{1}{1-q^i}$.
The exterior generators for $\Sym_\pm(\xx,\theta)^{\symm_n}$
over $\kk[\xx]^{\symm_n}$ are
$\{ de_1,\ldots,de_n\}$,
where $de_i:=\sum_{j=1}^n \frac{\partial e_i}{\partial x_j} \theta_j$ 
lies in $\ZZ^2$-degree tracked by $q^{i-1}u^1$.
This implies \eqref{eq:typeA-Solomon}.
Reflection group results of Shepler \cite{Shepler-thesis, Shepler} construct a similar exterior algebra structure over $\kk[\xx]^{\symm_n}$ on $\Sym_\pm(\xx,\theta)^{\symm_n,\sgn}$, implying \eqref{eq:typeA-Shepler};
see the calculation in Reiner, Shepler and Sommers \cite[Thm.~2.11]{ReinerSheplerSommers}.
\end{example}

%%%%%%%%%%%%%%%%%%%%%%%%%%%%
\section*{Acknowledgements}
The authors thank Jean-Yves Thibon for deriving \eqref{eq:Thibon-calculation}, which helped spur
this investigation.  They also thank Sasha Voronov for pointing them to the helpful reference \cite{Manin}.
Both authors supported by NSF grant DMS-2053288.
%%%%%%%%%%%%%%%%%%%%%%%%%%%%

%%%%%%%%%%%%%%%%%%%%%%%%%%%%
\section{\texorpdfstring{$\ZZ_2$}{Z2}-graded vector spaces and superalgebras}
\label{sec:superalgebras}
%%%%%%%%%%%%%%%%%%%%%%%%%%%%
For background on some of this material, see Manin \cite[Chap.~3]{Manin}. Throughout, abbreviate $\ZZ_2:=\ZZ/2\ZZ=\{\bar{0},\bar{1}\}$, and
let $\kk$ be any field not of characteristic two. All direct sums and tensor products will be over the field $\kk$, that is, $\oplus:=\oplus_\kk$ and $\otimes:=\otimes_\kk$.

\begin{definition} \rm
    A {\it $\ZZ_2$-graded $\kk$-vector space} is a $\kk$-vector space $V$ along with a choice of direct sum decomposition $V=V_{\bar{0}} \oplus V_{\bar{1}}$. 
A {\it supercommutative algebra} is an associative  $\kk$-algebra $A$ which is also a $\ZZ_2$-graded $\kk$-vector space $A=A_{\bar{0}} \oplus A_{\bar{1}}$, whose multiplication satisfies these compatibilities:
    \begin{itemize} 
    \item It {\it respects the $\ZZ_2$-grading}, meaning $A_{\bar{i}}  A_{\bar{j}} \subset A_{\bar{i}+\bar{j}}$ 
    for all $\bar{i},\bar{j} \in \ZZ_2$, and
    \item It is {\it graded-commutative}, meaning  
    $a b = (-1)^{\bar{i} \cdot \bar{j}} b a$ if $a \in A_{\bar{i}}$ and $b \in  A_{\bar{j}}
    $ for $\bar{i},\bar{j} \in \ZZ_2$.
    \end{itemize}
\end{definition}

\begin{example}
Given any $\ZZ_2$-graded $\kk$-vector space $V=V_{\bar{0}} \oplus V_{\bar{1}}$, one can form the {\it free supercommutative algebra  (or superpolynomial algebra) generated by $V$} as
$$
\Sym_\pm V:=\Sym(V_{\bar{0}}) \otimes \wedge(V_{\bar{1}}).    
$$
Here $\Sym U, \wedge U$ denote
the usual {\it symmetric, exterior algebras} on $U$, considered as
certain quotient $\kk$-vector spaces of
the {\it tensor algebra} $T(U)=\bigoplus_{d=0}^\infty T^d(U)$,
where $T^d(U)=U^{\otimes d}=U \otimes \cdots \otimes U$.
Multiplication in $\Sym_\pm V$ is componentwise over the tensor product, that is,
$
(a \otimes b)(a' \otimes b')
=a a' \otimes b b'
$
for $a,a' \in \Sym(V_{\bar{0}})$
and $b,b' \in \wedge(V_{\bar{1}})$.
Alternatively, as in the Introduction, one can
define $\Sym_\pm V$ as a quotient of the tensor
algebra $T(V)$ as follows:
\begin{equation}
\label{eq:superpolynomial-quadratic-relations} 
\Sym_\pm V =T(V)/I
\end{equation}
with $I$ the two-sided ideal 
generated by all
$a\otimes b-(-1)^{\bar{i} \cdot \bar{j}} \, b \otimes a$
where $a \in V_{\bar{i}}, b \in V_{\bar{j}}$
with $\bar{i},\bar{j} \in \ZZ_2$.

The presentation of $\Sym_\pm V$ given in \eqref{eq:superpolynomial-quadratic-relations} explains the name {\it free supercommutative algebra}, since it shows that $\Sym_\pm V$ is characterized by a {\it universal property}:  given any supercommutative algebra $A=A_{\bar{0}} \oplus A_{\bar{1}}$ and $\kk$-linear maps $f_{\bar{i}}: V_{\bar{i}} \rightarrow A_{\bar{i}}$
for $\bar{i}=\bar{0},\bar{1}$, one
can extend $f_{\bar{0}}, f_{\bar{1}}$ uniquely to a $\kk$-algebra map $f: \Sym_\pm V \rightarrow A$.

The $\ZZ_2$-graded $\kk$-vector space structure on $\Sym_\pm V$ is
actually a specialization of its natural {\it $\ZZ^2$-grading} under the map
$
\ZZ^2 \rightarrow \ZZ_2
$
that sends $(i,j) \mapsto \bar{j}$:
\begin{align*}
\Sym_\pm V &= \bigoplus_{(i,j) \in \NN^2} (\Sym_\pm V)_{ij}, \text{ where }
(\Sym_\pm V)_{ij}:=
\Sym^i(V_{\bar{0}}) \otimes \wedge^j(V_{\bar{1}})\\
&=
\underbrace{
\bigoplus_{\substack{(i,j):\\j \text{ even}}} (\Sym_\pm V)_{ij}}_{(\Sym_\pm V)_{\bar{0}}}
\quad \oplus \quad 
 \underbrace{
 \bigoplus_{\substack{(i,j):\\j \text{ odd}}}
(\Sym_\pm V)_{ij}}_{(\Sym_\pm V)_{\bar{1}}}.
\end{align*}

%After choosing $\kk$-bases $x_1,x_2,\ldots,x_{r_0}$ for $V_{\bar{0}}$ and $\theta_1,\theta_2,\ldots,\theta_{r_1}$ for $V_{\bar{1}}$, one can identify $\Sym_\pm V$ with this quotient of a free associative algebra: $$ \Sym_\pm V \cong  \kk\langle  x_1,x_2,\ldots,x_{r_0}, \theta_1,\theta_2,\ldots,\theta_{r_1} \rangle/I$$ where $I$ is the two-sided ideal generated by these  quadratic relations: \begin{equation} \label{eq:superpolynomial-quadratic-relations} \begin{array}{ll}x_i x_j - x_j x_i&\text{ for }1\leq i < j \leq r_0,\\ \theta_i \theta_j + \theta_j \theta_i, \,\, \theta_i^2  &\text{ for } 1 \leq i \leq j \leq r_1,\\    x_i \theta_j - \theta_j x_i & \text{ for } 1 \leq i \leq r_0 \text{ and } 1 \leq j \leq r_1. \end{array} \end{equation} These relations make the $\{x_i\}$ commute with themselves and with all $\{\theta_j\}$, while the $\{\theta_j\}$ anticommute among themselves.
\end{example}

\begin{example}
The presentation of $\Sym_\pm V$ in \eqref{eq:superpolynomial-quadratic-relations} also shows that if one regards it as a $\ZZ$-graded $\kk$-algebra generated by its degree one component $V$, then the relations in \eqref{eq:superpolynomial-quadratic-relations} are {\it quadratic}.  It turns out that $\Sym_\pm V$ is more strongly a {\it Koszul algebra} in the sense of Priddy \cite{Priddy};
see Fr\"oberg \cite{Froberg} and Polishchuk and Positselski \cite{PolishchukPositselski}. 
An important role in Section~\ref{sec:shuffles} is played by its
{\it Koszul dual quadratic algebra}, what Manin  \cite[\S3.2.6]{Manin}) calls the {\it superexterior algebra} $\Lambda_\kk(T)$ and which we shall write as $\wedge_\pm V$.  It has a very similar
(quadratic dual\footnote{To check that the two presentations \eqref{eq:superpolynomial-quadratic-relations}, \eqref{eq:superpolynomial-shriek-quadratic-relations} are quadratic duals, one must note that since $2 \in \kk^\times$, the presentation $\Sym_\pm U=T(U)/I$ from \eqref{eq:superpolynomial-quadratic-relations} has $a\otimes a=\frac{1}{2}(a\otimes a + a \otimes a) \in I$ for $a \in V_{\bar{1}}$, while the presentation $\wedge_\pm V=T(V)/J$ from \eqref{eq:superpolynomial-shriek-quadratic-relations} similarly has $a\otimes a \in J$ for $a \in V_{\bar{0}}$.}) presentation
\begin{equation}
\label{eq:superpolynomial-shriek-quadratic-relations}
\wedge_\pm V :=T(V)/J
\end{equation}
where $T(V)$ is the tensor algebra and $J$ is the two-sided ideal 
generated by
$a \otimes b+(-1)^{\bar{i} \cdot \bar{j}} \, b \otimes a$
for $a \in V_{\bar{i}}, b \in V_{\bar{j}}$
with $\bar{i},\bar{j} \in \ZZ_2$.
%presented as $$ \wedge_\pm V:=  \kk\langle X_1,X_2,\ldots,X_{r_0}, T_1,T_2,\ldots,T_{r_1} \rangle/J $$ where $J$ is the two-sided ideal generated by the  quadratic relations \begin{equation} \label{eq:superpolynomial-shriek-quadratic-relations} \begin{array}{ll} X_i X_j + X_j X_i, \,\, X_i^2  &\text{ for }1\leq i \leq j \leq r_0,\\    T_i T_j - T_j T_i&\text{ for } 1 \leq i < j \leq r_1,\\ X_i T_j + T_j X_i & \text{ for } 1 \leq i \leq r_0 \text{ and } 1 \leq j \leq r_1. \end{array} \end{equation} This says that the  $\{X_i\}$ anticommute with themselves  and with all other variables, while the $\{T_i\}$ commute among themselves.
The Koszul dual relation between $\Sym_\pm V$ and $\wedge_\pm V$ is reflected in a relation between their ($\ZZ$-graded) Hilbert series \cite[Cor.~2.2]{PolishchukPositselski}:
$$
\Hilb(\Sym_\pm V,t)=\frac{(1+t)^{r_1}}{(1-t)^{r_0}} \text{ and }
\Hilb(\wedge_\pm V,t)=\frac{(1+t)^{r_0}}{(1-t)^{r_1}}
$$
which satisfy 
$
\Hilb(\wedge_\pm V,t) \cdot  \Hilb(\Sym_\pm V,-t)=1.
$
\end{example}

%%%%%%%%%%%%%%%%%%%%%%%%%%%%
\section{Super Molien Theorem}
\label{sec:super-Molien}
%%%%%%%%%%%%%%%%%%%%%%%%%%%%

As mentioned in the Introduction,
given a $\ZZ_2$-graded $\kk$-vector
space $V=V_{\bar{0}} \oplus V_{\bar{1}}$, a subgroup 
$G$ of $GL(V)$ {\it respects the $\ZZ_2$-grading} if both $V_{\bar{0}}, V_{\bar{1}}$ are $G$-stable,
or equivalently, $G$ is a subgroup of $GL(V_{\bar{0}}) \times GL(V_{\bar{1}})$.
Throughout this section, we will assume $G$ is a such a subgroup, and
write its elements as
$g=(g^{(0)},g^{(1)})$ where $g^{(0)}=g|_{V_{\bar{0}}}$ and $g^{(1)}=g|_{V_{\bar{1}}}$. 

In this setting, the $G$-invariant subalgebra
$
(\Sym_\pm V)^G
$
inherits from $\Sym_\pm V$
the structure of a supercommutative $\kk$-algebra, as well as its $\ZZ^2$-grading.  We wish to compute its
$\ZZ^2$-graded {\it Hilbert series},
and more generally, for any group homomorphism/character $\chi: G \rightarrow \kk^\times$, the
Hilbert series for the
$\kk$-subspace of
{\it $\chi$-relative invariants}
$(\Sym_\pm V)^{G,\chi}$ 
inside $\Sym_\pm V$.
The following {\it super Molien theorem} straightforwardly generalizes the commutative case known as Molien's Theorem \cite{Molien} (see also \cite{Benson,Kane,Smith,Stanley-invariants}), along with Solomon's computation \cite[\S4]{Solomon-invariants} when $V_{\bar{0}} \cong V_{\bar{1}}$ 
and $G$ is a finite group diagonally embedded 
in $GL(V)$; see also 
Kane \cite[\S 22-4]{Kane}, Smith \cite[\S 9.3]{Smith}.

\begin{prop}
\label{prop:super-molien}
If a finite group $G$ respects the $\ZZ_2$-grading on $V=V_{\bar{0}} \oplus V_{\bar{1}}$, and $|G| \in \kk^\times$, then
$$
\Hilb((\Sym_\pm V)^G,q,u)
=\frac{1}{|G|}
\sum_{g \in G}
\frac{\det(1_{V_{\bar{1}}}+u \cdot g^{(1)})}{\det(1_{V_{\bar{0}}}-q \cdot g^{(0)})}.
$$
More generally, for any homomorphism $\chi: G \rightarrow \kk^\times$, one has
$$
\Hilb((\Sym_\pm V)^{G,\chi},q,u)
=\frac{1}{|G|}
\sum_{g \in G}
\chi(g^{-1}) \frac{\det(1_{V_{\bar{1}}}+u \cdot g^{(1)})}{\det(1_{V_{\bar{0}}}-q \cdot g^{(0)})}.
$$
\end{prop}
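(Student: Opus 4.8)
The plan is to reduce the statement to the classical Molien theorem (and Solomon's variant) by means of the standard projection/averaging argument, applied degree-by-degree in the $\ZZ^2$-grading. First I would fix $(i,j) \in \NN^2$ and work in the finite-dimensional $G$-module $W_{ij} := (\Sym_\pm V)_{ij} = \Sym^i(V_{\bar{0}}) \otimes \wedge^j(V_{\bar{1}})$. Since $|G| \in \kk^\times$, the operator $\pi^\chi := \frac{1}{|G|}\sum_{g \in G} \chi(g^{-1})\, g$ is a well-defined idempotent on $W_{ij}$ (here $\chi \equiv 1$ in the first formula), whose image is exactly the space of $\chi$-relative invariants $W_{ij}^{G,\chi}$; this is the usual Reynolds operator argument, using that $\chi(g)^{-1}=\chi(g^{-1})$ lies in $\kk^\times$. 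Taking traces, $\dim_\kk W_{ij}^{G,\chi} = \operatorname{Tr}(\pi^\chi \mid W_{ij}) = \frac{1}{|G|}\sum_{g \in G} \chi(g^{-1}) \operatorname{Tr}(g \mid W_{ij})$, which requires that $|G|$ and the dimension count be computed compatibly — harmless since $\chr \kk \nmid |G|$, so $\dim_\kk$ of the idempotent image equals the trace even in positive characteristic. Summing against $q^i u^j$ then gives
\[
\Hilb((\Sym_\pm V)^{G,\chi},q,u)
= \frac{1}{|G|} \sum_{g \in G} \chi(g^{-1}) \sum_{(i,j)\in\NN^2} \operatorname{Tr}\!\left(g \mid \Sym^i(V_{\bar{0}}) \otimes \wedge^j(V_{\bar{1}})\right) q^i u^j.
\]

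The second step is the computation of the inner generating function for a fixed $g=(g^{(0)},g^{(1)})$. Because the grading is a tensor product, $\operatorname{Tr}(g \mid \Sym^i(V_{\bar 0})\otimes \wedge^j(V_{\bar 1})) = \operatorname{Tr}(g^{(0)} \mid \Sym^i V_{\bar 0})\cdot \operatorname{Tr}(g^{(1)} \mid \wedge^j V_{\bar 1})$, so the double sum factors as a product of two classical generating functions. I would invoke the standard identities
\[
\sum_{i \ge 0} \operatorname{Tr}(g^{(0)} \mid \Sym^i V_{\bar 0})\, q^i = \frac{1}{\det(1_{V_{\bar 0}} - q\, g^{(0)})},
\qquad
\sum_{j \ge 0} \operatorname{Tr}(g^{(1)} \mid \wedge^j V_{\bar 1})\, u^j = \det(1_{V_{\bar 1}} + u\, g^{(1)}),
\]
the first proven by diagonalizing $g^{(0)}$ (over $\bar\kk$) and expanding $\prod (1-q\alpha_k)^{-1}$ as a product of geometric series, the second by expanding $\prod(1+u\beta_\ell)$ and recognizing the coefficient of $u^j$ as the $j$-th elementary symmetric function of the eigenvalues, which is $\operatorname{Tr}$ on $\wedge^j$. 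One subtlety worth a sentence: to diagonalize we may need to pass to $\bar\kk$, but traces and determinants of $\kk$-matrices are computed in $\kk$, so the final identity of formal power series lives in $\kk[[q,u]]$; and since $|G|\in\kk^\times$ these are genuine (not merely formal) manipulations in $\kk$.

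Combining the two steps yields
\[
\Hilb((\Sym_\pm V)^{G,\chi},q,u)
= \frac{1}{|G|}\sum_{g\in G}\chi(g^{-1})\,\frac{\det(1_{V_{\bar 1}} + u\, g^{(1)})}{\det(1_{V_{\bar 0}} - q\, g^{(0)})},
\]
and the first formula is the case $\chi\equiv 1$. I do not expect a serious obstacle here — this is a direct superization of Molien's theorem — but the one point demanding care is the justification that $\dim_\kk$ of the image of the idempotent $\pi^\chi$ equals its trace when $\chr\kk>0$; this is where the hypothesis $|G|\in\kk^\times$ is essential, since it guarantees $\pi^\chi$ is idempotent and that $W_{ij} = \operatorname{im}\pi^\chi \oplus \ker\pi^\chi$, so the trace of the projector is the dimension of its image. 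Everything else is formal bookkeeping with the tensor decomposition and the two classical eigenvalue identities.
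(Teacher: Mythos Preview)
Your proposal is correct and follows essentially the same approach as the paper: apply the projection idempotent $\frac{1}{|G|}\sum_{g}\chi(g^{-1})g$ in each bidegree to express the dimension of the relative invariants as an average of traces, then compute the bigraded trace of each $g$ via an eigenvalue argument over $\bar\kk$ to obtain the determinant ratio. The only cosmetic difference is that the paper triangularizes rather than diagonalizes $g^{(0)},g^{(1)}$ (which avoids needing to remark that finite-order operators with order prime to $\chr\kk$ are diagonalizable), but the computation is otherwise identical.
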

\begin{proof}
Since $|G|$ lies in $\kk^\times$, for any finite-dimensional $\kk G$-module $U$,
one has that the idempotent $e^\chi_G:=\frac{1}{|G|} 
\sum_{g \in G} \chi(g^{-1}) \cdot g$ in $\kk G$ acts as a projection 
$e^{\chi}_G: U \twoheadrightarrow U^{G,\chi}$.  Consequently 
$$
\dim_\kk U^{G,\chi} = \trace(e^{\chi}_G: U \rightarrow U)=
\frac{1}{|G|} 
\sum_{g \in G} \chi(g^{-1}) \cdot  
\trace(g: U \rightarrow U).
$$
Applying this to each bidegree $(i,j)$ of $\Sym_\pm V$ and summing gives
\begin{equation}
\label{eq:bigraded-trace-calculation}
\Hilb(\Sym_\pm V^{G,\chi},q,u)
= 
\frac{1}{|G|} 
\sum_{g \in G}\chi(g^{-1})  
\sum_{(i,j) \in \NN^2} q^i u^j
\trace(g: (\Sym_\pm V)_{ij} 
\rightarrow (\Sym_\pm V)_{ij}).
\end{equation}
One can extend $\kk$ to a field $\overline{\kk}$ where $g^{(0)}, g^{(1)}$ have all of their eigenvalues
$(\lambda_1,\ldots,\lambda_{r_0}),
(\mu_1,\ldots,\mu_{r_1})$, so that  the action of  $g^{(0)},g^{(1)}$ on $V_{\bar{0}}, V_{\bar{1}}$ has a $\overline{\kk}$-basis that is triangularized. 
This also leads to bases of monomial tensors for each
$(\Sym_\pm V)_{ij}=\Sym^i(V_{\bar{0}}) 
\otimes \wedge^j(V_{\bar{1}})$
in which $g^{(0)},g^{(1)}$ are again triangularized.  This
gives a $\ZZ^2$-graded trace calculation, which combined with \eqref{eq:bigraded-trace-calculation} 
finishes the proof of the proposition: 
$$
\sum_{(i,j) \in \NN^2} q^i u^j
\trace(g|_{(\Sym_\pm V)_{ij}})
=\prod_{j=1}^{r_1} 
(1+\mu_j y) 
\cdot
\prod_{i=1}^{r_0} (1+\lambda_i q + \lambda_i^2 q^2 + \cdots)
=\frac{\det(1_{V_{\bar{1}}}+u \cdot g^{(1)})}{\det(1_{V_{\bar{0}}}-q \cdot g^{(0)})}. \qedhere
$$
\end{proof}

\begin{remark}
\label{rem:relative-invariants-superMolien}
When $\kk=\CC$, a general form
of Proposition~\ref{prop:super-molien} holds
with the same proof, for the {\it character} $\chi:G \rightarrow \CC$ of any irreducible $\CC G$-module $U$,
along the lines of Stanley \cite[Thm.~2.1]{Stanley-invariants}.
Define the $(\Sym_\pm V)^G$-module $(\Sym_\pm V)^{G,\chi}$ of {\it $\chi$-relative invariants} to be the sum of all $\chi$-isotypic components
within each graded component of $\Sym_\pm V$; 
this generalizes the definition of $\chi$-relative invariants from the Introduction.
Then the element
$e_G^\chi:=\frac{\chi(1)}{|G|} \sum_{g \in G}
\chi(g^{-1}) g$ in $\CC G$ is a (central) idempotent that again projects $U \twoheadrightarrow U^{G,\chi}$ onto the $\chi$-isotypic components in finite-dimensional $\CC G$-modules $U$ (see, e.g., Etingof \cite[Prob.~4.5.2]{Etingof}).  From this one 
similarly deduces
\begin{equation}
\label{eq:general-relative-invariants-Molien}
 \Hilb((\Sym_\pm V)^{G,\chi},q,u)
=\frac{\chi(1)}{|G|}
\sum_{g \in G}
\chi(g^{-1}) \frac{\det(1_{V_{\bar{1}}}+u \cdot g^{(1)})}{\det(1_{V_{\bar{0}}}-q \cdot g^{(0)})}.
\end{equation}
\end{remark}

Proposition~\ref{prop:super-molien}
has a particularly convenient rephrasing
when working with a permutation group $P$ acting either on $V=V_{\bar{0}}$ or $V=V_{\bar{1}}$,
so that $\Sym_\pm V$ is either the polynomial algebra $\Sym V$ or the exterior algebra $\wedge V$.
This rephrasing uses
the ring of {\it symmetric functions}
(see Macdonald \cite{Macdonald}, Stanley \cite[Chap.~7]{Stanley-EC2}) mentioned in the Introduction: it is
the polynomial ring
$$
\Lambda:=\Lambda_\QQ=\QQ[p_1,p_2,\ldots]
$$
in a countable variable set $p_1,p_2,\ldots$ called {\it power sums}.  Consequently $\Lambda$ has a $\QQ$-basis $\{p_{\lambda}\}$
with $p_\lambda:=p_{\lambda_1} p_{\lambda_2} \cdots p_{\lambda_\ell}$ indexed
by {\it partitions} $\lambda=(\lambda_1,\ldots,\lambda_\ell)$
with $\lambda_1 \geq \lambda_2 \geq \cdots \geq \lambda_\ell >0$.

This rephrasing also uses 
{\it P\'olya's cycle index} $Z_P$ 
of a permutation group $P$ (see, e.g., Stanley \cite[\S7.24]{Stanley-EC2}) and its signed
variant $Z^{\sgn}_P$, that appeared in the Introduction as \eqref{eq:cycle-index-defn}, \eqref{eq:signed-cycle-index-defn}: 
\begin{align*}
Z_P&:=\frac{1}{|P|}
\sum_{\sigma \in P} p_{\lambda(\sigma)}
=
\frac{1}{|P|}
\sum_{\sigma \in P} \prod_{\text{ cycles }C \text{ of }\sigma} p_{|C|},\\
Z^{\sgn}_P&:=
\frac{1}{|P|}
\sum_{\sigma \in P} \sgn(\sigma) \cdot p_{\lambda(\sigma)}
=
\frac{1}{|P|}
\sum_{\sigma \in P} \prod_{\text{ cycles }C \text{ of }\sigma}(-1)^{|C|-1} p_{|C|}
=\omega(Z_P).
\end{align*}
Here $\lambda(\sigma)$ is the cycle type partition of $\sigma$, and
$\sgn: \symm_n \rightarrow \{\pm 1\}$ is the {\it sign homomorphism}, while $\omega: \Lambda \rightarrow \Lambda$ is the {\it fundamental involution} on symmetric
functions $\Lambda=\QQ[p_1,p_2,\ldots]$ which can be defined by
$\omega(p_r):=(-1)^{r-1} p_r$ for $r=1,2,\ldots$.  Lastly, we need the 
the notion of {\it plethystic substitution}; see
Loehr and Remmel \cite{LoehrRemmel},  Macdonald \cite[\S I.8]{Macdonald} and Stanley \cite[Defn.~A.2.6]{Stanley-EC2}.

\begin{definition} \rm
\label{def:univariate-plethystic-subsitution}
For a univariate power series $f(q)=\sum_{i=0}^\infty a_i q^i$ in $\QQ[[q]]$, one has a ring homomorphism
$\Lambda \rightarrow \QQ[[q]]$, called the {\it plethystic substitution}
$g \mapsto g[f(q)]$, defined uniquely by sending
each $p_r \mapsto p_r[f(q)]:=f(q^r)=\sum_{i=0}^\infty a_i q^{ri}$.  Similarly, for a
bivariate series $f(q,u)$ in $\QQ[[q,u]]$, one defines the plethystic substitution $\Lambda \rightarrow \QQ[[q,u]]$, 
sending  
$p_r\mapsto p_r[f(q,u)] := f(q^r,u^r)$.
\end{definition}

\begin{cor}
\label{cor:Molien-for-permutation-groups}
For $P$ a subgroup of $\symm_n$
permuting coordinates in $V=\kk^n$, with
$|P|$ in $\kk^\times$, 
and $\sgn: P \rightarrow \{\pm 1\}$
the sign homomorphism, one has these
plethystic Hilbert series formulas:
\begin{align}
\label{eq:commutative-Molien-for-permutation-groups}
\Hilb((\Sym V)^P,q)&
=Z_P\left[1/(1-q)\right]
=Z_P\left[1+q+q^2+q^3+\cdots \right],\\
\label{eq:anti-commutative-Molien-for-permutation-groups}
\Hilb((\wedge V)^P,-u)
&=Z_P\left[1-u\right],\\
\label{eq:commutative-sign-Molien-for-permutation-groups}
\Hilb((\Sym V)^{P,\sgn},q)&
=Z^{\sgn}_P\left[1/(1-q)\right],\\
\label{eq:anti-commutative-sign-Molien-for-permutation-groups}
\Hilb((\wedge V)^{P,\sgn},-u)
&=Z^{\sgn}_P\left[1-u\right].
\end{align}
In the special case that
$r_0=r_1=n$ and one embeds $P$ diagonally into
$\symm_n \times \symm_n$, so that it acts diagonally
on $\Sym_\pm V=\Sym(\kk^n) \otimes \wedge(\kk^n)$, then one has
\begin{align}
\label{eq:diagonal-super-Molien-for-permutation-groups}
\Hilb((\Sym_\pm V)^P,q,-u)
=Z_P\left[\frac{1-u}{1-q}\right],\\
\label{eq:diagonal-sign-super-Molien-for-permutation-groups}
\Hilb((\Sym_\pm V)^{P,\sgn},q,-u)
=Z^{\sgn}_P\left[\frac{1-u}{1-q}\right].
\end{align}
\end{cor}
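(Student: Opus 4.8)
The plan is to derive Corollary~\ref{cor:Molien-for-permutation-groups} directly from the super Molien theorem (Proposition~\ref{prop:super-molien}) by recognizing each averaged determinant as a plethystic evaluation of a monomial $p_{\lambda(\sigma)}$. Concretely, suppose $P \subseteq \symm_n$ acts by permuting coordinates of $V=\kk^n$, with permutation matrix $\sigma$ having cycle type $\lambda(\sigma)=(\lambda_1,\ldots,\lambda_\ell)$. The first step is the linear-algebra fact that for any scalar $z$,
\begin{equation}
\label{eq:perm-det-identity}
\det(1_V - z\cdot \sigma) = \prod_{\text{cycles }C\text{ of }\sigma}(1-z^{|C|}),
\qquad
\det(1_V + z\cdot \sigma) = \prod_{\text{cycles }C\text{ of }\sigma}(1+(-1)^{|C|-1}z^{|C|}),
\end{equation}
the first being standard (block-diagonalize by cycles, each cycle contributing a companion matrix of $x^{|C|}-1$), and the second following from the first by replacing $z$ with $-z$ and factoring out signs cycle by cycle, exactly matching the $(-1)^{|C|-1}$ appearing in $Z_P^{\sgn}$.

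Next I would substitute these into Proposition~\ref{prop:super-molien}. For \eqref{eq:commutative-Molien-for-permutation-groups} take $V=V_{\bar 0}$, so $g^{(0)}=\sigma$ and the numerator is trivial; then $\frac{1}{|P|}\sum_{\sigma}\det(1-q\sigma)^{-1} = \frac{1}{|P|}\sum_\sigma \prod_{C}(1-q^{|C|})^{-1} = \frac{1}{|P|}\sum_\sigma \prod_C p_{|C|}[1/(1-q)]$, which is precisely $Z_P[1/(1-q)]$ by the definition of $Z_P$ in \eqref{eq:cycle-index-defn} and of plethystic substitution. For \eqref{eq:anti-commutative-Molien-for-permutation-groups} take $V=V_{\bar 1}$, so the denominator is trivial and one gets $\frac{1}{|P|}\sum_\sigma \det(1_V - u\sigma)$ — here one must be slightly careful with signs: Proposition~\ref{prop:super-molien} gives $\det(1_{V_{\bar 1}}+u\cdot g^{(1)})$ evaluated at the variable called $u$, so to obtain $\Hilb((\wedge V)^P,-u)$ one sets the Hilbert-series variable to $-u$, giving $\det(1_V - u\sigma) = \prod_C(1-u^{|C|})=\prod_C p_{|C|}[1-u]$, hence $Z_P[1-u]$. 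The two $\sgn$-versions \eqref{eq:commutative-sign-Molien-for-permutation-groups}, \eqref{eq:anti-commutative-sign-Molien-for-permutation-groups} are identical except that each summand carries the factor $\sgn(\sigma)=\prod_C(-1)^{|C|-1}$, which by the second identity in \eqref{eq:perm-det-identity} is exactly what converts $\prod_C p_{|C|}$ into $\prod_C (-1)^{|C|-1}p_{|C|}$, i.e.\ converts $Z_P$ into $Z_P^{\sgn}$; alternatively invoke $Z_P^{\sgn}=\omega(Z_P)$ and $\omega(p_r)=(-1)^{r-1}p_r$ together with the observation that plethysm in $1/(1-q)$ or $1-u$ commutes with $\omega$ applied at the level of each $p_r$.

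For the diagonal case \eqref{eq:diagonal-super-Molien-for-permutation-groups}, \eqref{eq:diagonal-sign-super-Molien-for-permutation-groups}, with $r_0=r_1=n$ and $P$ embedded diagonally, one has $g^{(0)}=g^{(1)}=\sigma$, so each summand of Proposition~\ref{prop:super-molien} (with Hilbert-variable $u$ sent to $-u$, so the numerator is $\det(1_V-u\sigma)$) becomes
$$
\frac{\det(1_V - u\sigma)}{\det(1_V - q\sigma)}
=\prod_{\text{cycles }C}\frac{1-u^{|C|}}{1-q^{|C|}}
=\prod_{\text{cycles }C} p_{|C|}\!\left[\frac{1-u}{1-q}\right],
$$
using \eqref{eq:perm-det-identity} and the fact that $p_r[(1-u)/(1-q)] = (1-u^r)/(1-q^r)$ since plethystic substitution is a ring homomorphism sending $p_r \mapsto f(q^r,u^r)$ for $f(q,u)=(1-u)/(1-q)$. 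Averaging over $\sigma\in P$ gives $Z_P[(1-u)/(1-q)]$, and inserting the $\sgn(\sigma)$ factor gives $Z_P^{\sgn}[(1-u)/(1-q)]$, proving both diagonal formulas. I expect the only genuinely delicate point to be bookkeeping of signs — keeping straight the distinction between the variable $u$ in Proposition~\ref{prop:super-molien} and the argument $-u$ in the corollary, and checking that the $(-1)^{|C|-1}$ factors from $\sgn(\sigma)$ line up with those in the definition of $Z_P^{\sgn}$ rather than double-counting; everything else is a direct substitution. (A sanity check: setting $q=0$ in \eqref{eq:diagonal-super-Molien-for-permutation-groups} must recover \eqref{eq:anti-commutative-Molien-for-permutation-groups}, which it does since $p_r[(1-u)/1]=1-u^r=p_r[1-u]$.)
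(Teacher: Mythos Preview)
Your proposal is correct and follows essentially the same route as the paper: compute $\det(1_V - z\sigma)=\prod_C(1-z^{|C|})$ by block-diagonalizing over cycles, recognize each summand in Proposition~\ref{prop:super-molien} as the plethystic evaluation $p_{\lambda(\sigma)}[f]$ for the appropriate $f\in\{1/(1-q),\,1-u,\,(1-u)/(1-q)\}$, and average over $P$ (with or without the $\sgn(\sigma)$ weight) to obtain $Z_P[f]$ or $Z_P^{\sgn}[f]$. The only difference is that your second identity in \eqref{eq:perm-det-identity} is not actually needed---the $\sgn$ cases follow simply from inserting $\chi(\sigma^{-1})=\sgn(\sigma)=\prod_C(-1)^{|C|-1}$ into the relative-invariant form of Proposition~\ref{prop:super-molien}, as you yourself note---so you could streamline by omitting it.
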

\noindent
Note 
\eqref{eq:diagonal-super-Molien-for-permutation-groups} 
implies \eqref{eq:commutative-Molien-for-permutation-groups}, 
\eqref{eq:anti-commutative-Molien-for-permutation-groups} setting $u=0, q=0$, respectively,
and 
\eqref{eq:diagonal-sign-super-Molien-for-permutation-groups} similarly
implies \eqref{eq:commutative-sign-Molien-for-permutation-groups}, 
\eqref{eq:anti-commutative-sign-Molien-for-permutation-groups}.
\begin{proof}
Compare Proposition~\ref{prop:super-molien} and definitions 
\eqref{eq:cycle-index-defn},
\eqref{eq:signed-cycle-index-defn} 
of $Z_P, Z^{\sgn}_P$ with these calculations for $\sigma$:
\begin{align*}
\frac{1}{\det(1_V - q \cdot \sigma)}
&= \prod_{\substack{\text{cycles}\\C \text{ of }\sigma}} 
\frac{1}{1-q^{|C|}}
=p_{\lambda(\sigma)}\left[1/(1-q)\right],\\
\det(1_V - u \cdot \sigma)
&= \prod_{\substack{\text{cycles}\\C \text{ of }\sigma}} 
(1-u^{|C|})
=p_{\lambda(\sigma)}\left[1-u\right]\\
\frac{\det(1_{V_{\bar{0}}} -u \cdot \sigma)}
{\det(1_{V_{\bar{1}}} - q \cdot \sigma)}
&= \prod_{\substack{\text{cycles}\\C \text{ of }\sigma}} 
\frac{1-u^{|C|}}{1-q^{|C|}}
=p_{\lambda(\sigma)}\left[\frac{1-u}{1-q}\right].\qedhere
\end{align*}
\end{proof}

%%%%%%%%%%%%%%%%%%%%%%%%%%%%
\section{Wreath products and proof of Theorem~\ref{thm:wreath-Molien-theorem}}
\label{sec:wreath-Molien}
%%%%%%%%%%%%%%%%%%%%%%%%%%%%

This section generalizes a calculation in invariant theory of wreath products acting on commutative polynomial algebras $\Sym V=\kk[x_1,\ldots,x_n]$,
due to Dias and Stewart \cite{DiasStewart},
related to Solomon \cite{Solomon-partition-identities}.
The result is generalized from polynomial algebras $\Sym V$ to
also allow exterior algebras $\wedge V$ and superpolynomial algebras $\Sym_\pm V$.

\begin{definition} \rm
    Recall that for a subgroup $G$ of $GL(V)$ and subgroup $P$ of $\symm_n$, one can create their {\it wreath product} $G \wr P = P[G]$ as a subgroup of $GL(V^n)$ acting on 
    $$
    V^n:=V^{\oplus n}=\underbrace{V \oplus \cdots \oplus V}_{n\text{ summands}}
    $$
    as follows.  Let $\sigma \in P$ and $\gg=(g_1,\ldots,g_n) \in G^n$ act on $\vvv=(v_1,\ldots,v_n) \in V^n$ via
    \begin{align*}
    \sigma(\vvv)&=(v_{\sigma^{-1}(1)},\ldots,v_{\sigma^{-1}(n)}),\\
    \gg(\vvv)&=(g_1 v_1,\ldots,g_n v_n).
    \end{align*}
    This embeds both $P, G^n$ as subgroups of $GL(V^n)$, and $P[G]$ is the subgroup of $GL(V^n)$ generated by their images. It is a semidirect product $P \ltimes  G^n$, in which a pair $(\sigma,\gg)$ acts on $\vvv \in V^n$ as
    $$
    (\sigma,\gg)(\vvv)=
    (g_{\sigma^{-1}(1)} v_{\sigma^{-1}(1)}, 
    \,\, \ldots, \,\, 
    g_{\sigma^{-1}(n)} v_{\sigma^{-1}(n)}).
    $$
Also define the {\it sign homomorphism} $\sgn: P[G] \rightarrow \{\pm 1\}$ via $\sgn(\sigma, \gg):=\sgn(\sigma)$, i.e., the composite
    $$
    P[G] \twoheadrightarrow P[G]/G^n \cong P \hookrightarrow \symm_n \overset{\sgn}{\longrightarrow} \{\pm 1\}.
    $$
\end{definition}

When $V=V_{\bar{0}} \oplus V_{\bar{1}}$ is a $\ZZ_2$-graded $\kk$-vector space, it gives rise to a $\ZZ_2$-grading
$V^n=V_{\bar{0}}^n \oplus V_{\bar{1}}^n$.  Furthermore,
when a subgroup $G$ of $GL(V)$ respects the
$\ZZ_2$-grading on $V$,
then for any permutation subgroup $P$ of $\symm_n$, the wreath product $P[G]$ inside $GL(V^n)$ will respect
this $\ZZ_2$-grading on $V^n$.
We therefore refer to the elements
of $P[G]$ as 
$(\sigma,\gg)=((\sigma,\gg)^{(0)}),(\sigma,\gg)^{(1)})$ in $GL(V_{\bar{0}}^n) \times GL(V_{\bar{}}^n)$.

The following block matrix lemma \cite[Lemma 6.6]{DiasStewart} plays a key role in the proof of Theorem~\ref{thm:wreath-Molien-theorem}.
\begin{lemma}
\label{lem:block-cycle-row-reduction}
    For any matrices $A_1,\ldots,A_m \in \kk^{r \times r}$, one has
    \begin{equation*}
   % \label{eq:block-cycle-det}
    \det\left( I_{rm}-
    \left[
    \begin{smallmatrix} 
    0 & 0 & \cdots  & 0 & A_1\\
    A_2 & 0 & \cdots  & 0 & 0\\
    0 & A_3 & \cdots  & 0&  0\\
    \vdots & \vdots& \ddots & \vdots & \vdots\\
    0 & 0 & \cdots & A_m & 0\\
    \end{smallmatrix}
    \right]\right) 
    = \det(I_r - A_m A_{m-1} \cdots A_2 A_1).
    \end{equation*}
\end{lemma}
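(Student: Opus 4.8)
The plan is to prove the block-matrix determinant identity by a sequence of determinant-preserving block row and column operations that successively eliminate the off-diagonal blocks $A_2, A_3, \ldots, A_m$, folding them one at a time into the top-right corner until only a single $r \times r$ block $I_r - A_m A_{m-1} \cdots A_2 A_1$ remains. Concretely, write $M = I_{rm} - N$ where $N$ is the displayed block-cyclic matrix, so that the $(i,i)$ block of $M$ is $I_r$, the $(i+1,i)$ block is $-A_{i+1}$ for $1 \le i \le m-1$, and the $(1,m)$ block is $-A_1$. Since the diagonal block in block-column $1$ is the invertible matrix $I_r$, I can use block-column $1$ to clear the block $-A_2$ sitting below it: add $A_2$ times block-column $1$ to block-column $2$. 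This leaves block-column $2$ with diagonal block $I_r$ and a new block $-A_2 A_1$ appearing in the $(1,2)$ position (from the $(1,1)$ entry $I_r$ of column $1$ being scaled by $A_2$ and added — one must track signs carefully here). Actually it is cleaner to phrase this as block \emph{row} operations using the $I_r$'s as pivots: use block-row $1$ to eliminate $-A_2$ in block-row $2$, then use the modified block-row $2$ to eliminate $-A_3$ in block-row $3$, and so on.

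The key steps, in order, are as follows. First, set up the $rm \times rm$ block matrix $M$ explicitly and note that block determinants are unchanged under adding a left-multiple of one block row to another, because these correspond to left-multiplication by block-unitriangular matrices of determinant $1$. Second, run the elimination: for $i = 1, 2, \ldots, m-1$ in turn, add $A_{i+1} \cdot (\text{current block-row } i)$ to block-row $i+1$. After step $i$, block-row $i+1$ has had its $-A_{i+1}$ entry (in block-column $i$) cancelled, its diagonal entry is still $I_r$, and a new entry $-A_{i+1} A_i \cdots A_1$ has been created in block-column $m$ (this is the inductive bookkeeping: the top-right corner entry propagates down the subdiagonal, accumulating one factor of $A$ at each step). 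Third, after all $m-1$ steps, the matrix is block upper-triangular except that block-row $m$ now reads: diagonal entry $I_r$ in block-column $m$, together with the accumulated entry $-A_m A_{m-1} \cdots A_1$ also in block-column $m$, so the $(m,m)$ block has become $I_r - A_m A_{m-1} \cdots A_2 A_1$ and every block strictly below the diagonal is zero. Fourth, the determinant of a block upper-triangular matrix is the product of the determinants of its diagonal blocks; here the first $m-1$ diagonal blocks are $I_r$ (contributing $1$), and the last is $I_r - A_m \cdots A_1$, yielding the claimed identity.

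The one genuine subtlety — the step I expect to require the most care — is the inductive claim about exactly \emph{where} the accumulated product lands and with what sign as the elimination proceeds; a careless treatment can misplace a factor or reverse the order of the $A_i$'s, and in particular one must confirm that the product comes out in the order $A_m A_{m-1} \cdots A_2 A_1$ rather than the reverse. The clean way to make this airtight is to prove by induction on $m$: strip off block-row and block-column $1$ using the $(1,1)$ pivot $I_r$ via the Schur complement (or, equivalently, the elimination of just $-A_2$), which reduces the $m$-block problem to an $(m-1)$-block problem with matrices $A_2, A_3, \ldots, A_{m-1}, (A_m A_1)$ — note the last two cycle-partners $A_m$ and $A_1$ get composed — and then invoke the inductive hypothesis. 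The base case $m = 1$ is the tautology $\det(I_r - A_1) = \det(I_r - A_1)$ (or $m = 2$: $\det\begin{psmallmatrix} I_r & -A_1 \\ -A_2 & I_r \end{psmallmatrix} = \det(I_r - A_2 A_1)$ by the Schur complement formula). I would present whichever of the two formulations — explicit row reduction versus Schur-complement induction — reads most transparently; the induction is shorter to write rigorously, while the row reduction is more self-contained. No step here is deep; it is purely a matter of organizing the linear algebra so the order of multiplication is manifestly correct.
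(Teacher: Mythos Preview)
Your proposal is correct and is essentially the same argument as the paper's. The paper simply packages your sequence of block-row operations ``add $A_{i+1}\cdot(\text{row }i)$ to row $i+1$'' into a single matrix identity $AB=C$, where $A$ is the block lower-unitriangular matrix recording those operations, $B=I_{rm}-N$ is the matrix in question, and $C$ is the resulting block upper-triangular matrix with $(m,m)$ block $I_r - A_m\cdots A_1$; equating determinants then gives the lemma in one line.
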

\begin{proof}[Proof of Lemma~\ref{lem:block-cycle-row-reduction}.]
Equate determinants on the sides of this identity $AB=C$, shown for $m=4$:
\small
\begin{align*}
%\label{eq:product-of-two-block-matrices}
&\begin{pmatrix}
     I_r &\rvline&   &\rvline&  &\rvline&   &      \\\hline
     A_2&\rvline &I_r&\rvline&  &\rvline& &    \\\hline
     A_3A_2&\rvline&A_3&\rvline&I_r&\rvline& & \\\hline
     A_4A_3A_2&\rvline&A_4A_3&\rvline&A_4&\rvline&I_r& \\
     %\hline   A_5A_4A_3A_2&\rvline&A_5A_4A_3&\rvline&A_5A_4&\rvline&A_5&\rvline&I_r
 \end{pmatrix}
 \begin{pmatrix}
     I_r &\rvline&   &\rvline&  &\rvline&   -A_1      \\\hline
     -A_2&\rvline &I_r&\rvline&  &\rvline& &    \\\hline
         &\rvline& -A_3&\rvline&I_r&\rvline& & \\\hline
         &\rvline& &\rvline& -A_4&\rvline&I_r& \\
         %\hline   &\rvline& &\rvline& &\rvline& -A_5&\rvline&I_r
 \end{pmatrix}
%\notag &
%\qquad \qquad 
= 
 \begin{pmatrix}
     I_r &\rvline&   &\rvline&  &\rvline&   &\rvline&  -A_1    \\\hline
     &\rvline &I_r&\rvline&  &\rvline& &\rvline&   -A_2A_1 \\\hline
     &\rvline& &\rvline&I_r&\rvline& &\rvline& -A_3A_2A_1 \\\hline
     &\rvline& &\rvline& &\rvline&I_r&\rvline & I_r-A_4A_3A_2A_1\\
     %\hline   &\rvline& &\rvline& &\rvline& &\rvline&I_r - A_5A_4A_3A_2A_1
 \end{pmatrix}
\end{align*}
\normalsize
Note $\det A=1$, while $\det B, \det C$ are the left, right sides in the lemma.
\end{proof}

We can now restate and prove here
Theorem~\ref{thm:wreath-Molien-theorem}
from the Introduction,
on wreath products $P[G]$. Its commutative case
for $\Sym(V^n)^{P[G]}$ appears in Dias and Stewart
\cite[\S6.2, Thm.~6.4]{DiasStewart}, while its commutative case for $\Sym(V^n)^{\symm_n[G]}$ and
$\Sym(V^n)^{\symm_n[G],\sgn}$ are implicit in Solomon
\cite[\S 2]{Solomon-partition-identities}.

\vskip.1in
\noindent
{\bf Theorem~\ref{thm:wreath-Molien-theorem}.}
{\it 
In the above setting, if $|G|,|P|$ lie in $k^\times$, then
\begin{align*}
\Hilb((\Sym_\pm V^n)^{P[G]},q,-u)
&=Z_P\left[\, \Hilb((\Sym_\pm V)^G,q,-u)\, \right]\\
\Hilb((\Sym_\pm V^n)^{P[G],\sgn},q,-u)
&=Z^{\sgn}_P\left[\, \Hilb((\Sym_\pm V)^G,q,-u)\, \right].
\end{align*}
}
\vskip.1in
\begin{proof}
Noting that $|P[G]|=|P| \cdot |G|^n$,
applying Proposition~\ref{prop:super-molien} to $P[G]$ yields these formulas:
\begin{align*}
\Hilb((\Sym_\pm V^n)^{P[G]},q,-u)
&=\frac{1}{|P[G]|}
\sum_{(\sigma,\gg)  \in P[G]}
\frac{\det(1_{V_{\bar{1}}}-u \cdot (\sigma,\gg)^{(1)})}
{\det(1_{V_{\bar{0}}}-q \cdot (\sigma,\gg)^{(0)})} \\
%%%
&=\frac{1}{|P|}
\sum_{\sigma \in P}
\left( \frac{1}{|G|^n}
\sum_{\gg  \in G^n}
\frac{\det(1_{V_{\bar{1}}}-u \cdot (\sigma,\gg)^{(1)})}
{\det(1_{V_{\bar{0}}}-q \cdot (\sigma,\gg)^{(0)})} \right),\\
\Hilb((\Sym_\pm V^n)^{P[G],\sgn},q,-u)
&=\frac{1}{|P|}
\sum_{\sigma \in P} \sgn(\sigma)
\left( \frac{1}{|G|^n}
\sum_{\gg  \in G^n}
\frac{\det(1_{V_{\bar{1}}}-u \cdot (\sigma,\gg)^{(1)})}
{\det(1_{V_{\bar{0}}}-q \cdot (\sigma,\gg)^{(0)})} \right).
\end{align*}
Comparing these with the definitions
\eqref{eq:cycle-index-defn}, \eqref{eq:signed-cycle-index-defn} of $Z_P, Z^{\sgn}_P$ as
$$
Z_P=\frac{1}{|P|}
\sum_{\sigma \in P} p_{\lambda(\sigma)}
\quad \text{ and } \quad
Z^{\sgn}_P=\frac{1}{|P|}
\sum_{\sigma \in P} \sgn(\sigma) \cdot  p_{\lambda(\sigma)}
$$
it suffices to prove the following identity for each permutation $\sigma$ in $\symm_n$:
\begin{equation}
\label{eq:wreath-permutation-traces}
\frac{1}{|G|^n}
\sum_{\gg  \in G^n}
\frac{\det(1_{V_{\bar{1}}^n}-u \cdot (\sigma,\gg)^{(1)})}
{\det(1_{V_{\bar{0}}^n}-q \cdot (\sigma,\gg)^{(0)})}
=p_{\lambda(\sigma)}\left[ \Hilb((\Sym_\pm V)^G,q,-u)\right].
\end{equation}
Writing $\sigma = \sigma_1 \cdots \sigma_\ell$ as a product of cycles $\sigma_i$ of size $\lambda_i$, the right side of \eqref{eq:wreath-permutation-traces} is a product:
\begin{align}
\notag
p_{\lambda(\sigma)}\left[ \Hilb((\Sym_\pm V)^G,q,-u)\right] 
&=\prod_{i=1}^\ell
p_{\lambda_i}\left[ \Hilb((\Sym_\pm V)^G,q,-u)\right]\\
\label{eq:product-form-for-right}
&=\prod_{i=1}^\ell
 \Hilb((\Sym_\pm V)^G,q^{\lambda_i},-u^{\lambda_i})
 \end{align}
Meanwhile, decomposing 
$V^n = \bigoplus_{i=1}^\ell V^{\lambda_i}$ according to the cycles $\sigma_i$ of $\sigma$, with a corresponding decomposition
$\gg=(\gg(1),\gg(2),\ldots,\gg(\ell))$ where
$\gg(i) \in G^{\lambda_i}$, one has this
determinant
factorization:
\begin{equation}
\label{eq:charpoly-factorizations}
\frac{\det(1_{V_{\bar{1}}^n}-u \cdot (\sigma,\gg)^{(1)})}
{\det(1_{V_{\bar{0}}^n}-q \cdot (\sigma,\gg)^{(0)})}
= \prod_{i=1}^\ell
\frac{\det(1_{V_{\bar{1}}^{\lambda_i}}-u \cdot (\sigma,\gg(i))^{(1)})}
{\det(1_{V_{\bar{0}}^{\lambda_i}}-q \cdot (\sigma,\gg(i))^{(0)})}.
\end{equation}
Using \eqref{eq:charpoly-factorizations} and interchanging a sum and product, one can rewrite the left side of 
\eqref{eq:wreath-permutation-traces} as a product:
\begin{equation}
\label{eq:interchanged-product-for-sum}
\frac{1}{|G|^n}
\sum_{\gg  \in G^n}
\frac{\det(1_{V_{\bar{1}}^n}-u \cdot (\sigma,\gg)^{(1)})}
{\det(1_{V_{\bar{0}}^n}-q \cdot (\sigma,\gg)^{(0)})}
= \prod_{i=1}^\ell
\frac{1}{|G|^{\lambda_i}}
\sum_{\gg  \in G^{\lambda_i}}
\frac{\det(1_{V_{\bar{1}}^{\lambda_i}}-u \cdot (\sigma,\gg(i))^{(1)})}
{\det(1_{V_{\bar{0}}^{\lambda_i}}-q \cdot (\sigma,\gg(i))^{(0)})}.
\end{equation}
Comparing the right sides in \eqref{eq:product-form-for-right}, \eqref{eq:interchanged-product-for-sum}, in order to prove the desired equation \eqref{eq:wreath-permutation-traces}, it suffices to prove the following
special case for any $m$-cycle $\sigma$ in $\symm_m$:
\begin{equation}
\label{eq:single-cycle-identity}
\frac{1}{|G|^m}
\sum_{\gg  \in G^m}
\frac{\det(1_{V_{\bar{1}}^{m}}-u \cdot (\sigma,\gg)^{(1)})}
{\det(1_{V_{\bar{0}}^{m}}-q \cdot (\sigma,\gg)^{(0)})}
= \Hilb((\Sym_\pm V)^G,q^m,-u^m).
\end{equation}
This follows from a calculation whose steps are explained below:
\begin{align*}
\label{eq:single-cycle-identity}
\frac{1}{|G|^m}
\sum_{\gg  \in G^m}
\frac{\det(1_{V_{\bar{1}}^{m}}-u \cdot (\sigma,\gg)^{(1)})}
{\det(1_{V_{\bar{0}}^{m}}-q \cdot (\sigma,\gg)^{(0)})}
&\overset{(a)}{=}
\frac{1}{|G|^m}
\sum_{\gg  \in G^m}
\frac{\det\left(1_{V_{\bar{1}}}-
(u \cdot g_1^{(1)}) \cdots  (u \cdot g_m^{(1)}) 
\right)}
{\det\left(1_{V_{\bar{0}}}-
(q \cdot g_1^{(0)}) \cdots  (q \cdot g_m^{(0)}) 
\right)}\\
&\overset{(b)}{=}\frac{1}{|G|^m}
\sum_{\gg  \in G^m}
\frac{\det\left(1_{V_{\bar{1}}}-
u^m (g_1 \cdots g_{m})^{(1)}\right)
}
{\det\left(1_{V_{\bar{0}}}-
q^m (g_1 \cdots g_{m})^{(0)}\right)
}\\
&=
\frac{1}{|G|^m}
\sum_{g
%=(g^{(0)},g^{(1)})
\in G}
\sum_{\substack{\gg  \in G^m:\\ g_1 \cdots g_m=g}}
\frac{\det(1_{V_{\bar{1}}}-
u^m g^{(1)})
}
{\det(1_{V_{\bar{0}}}-
q^m g^{(0)})
}\\
&\overset{(c)}{=}
\frac{1}{|G|}
\sum_{g
%=(g^{(0)},g^{(1)})
\in G}
\frac{\det(1_{V_{\bar{1}}}-
u^m g^{(1)})
}
{\det(1_{V_{\bar{0}}}-
q^m g^{(0)})
}
\overset{(d)}{=} \Hilb((\Sym_\pm V)^G,q^m,-u^m).
\end{align*}
Equality (a) applied Lemma~\ref{lem:block-cycle-row-reduction} twice for each summand $\gg \in G^m$: 
\begin{itemize}
    \item once in the denominator with $(A_1,\ldots,A_m)=(q \cdot g^{(0)}_1,\ldots,q \cdot g^{(0)}_m)$,
    \item once in the numerator with $(A_1,\ldots,A_m)=
(u \cdot g^{(1)}_1,\ldots,u \cdot g^{(1)}_m)$.
\end{itemize}
Equality (b) used the fact that $g,h \in G \subset GL(V_{\bar{0}}) \times GL(V_{\bar{1}})$
have $(gh)^{(i)}=g^{(i)} h^{(i)}$ for $i=0,1$.
Equality (c) arises since for each $g$ in $G$, there are $|G|^{m-1}$ solutions $\gg=(g_1,\ldots,g_m) \in G^m$ to the equation
$g_1 \cdots g_m=g$. Equality (d) is Proposition~\ref{prop:super-molien}, substituting $q \mapsto q^m$ and $u \mapsto -u^m$.
\end{proof}

\begin{remark}
 Dias and Stewart \cite[Thm.~6.4]{DiasStewart} work with $\kk=\RR$ or $\CC$ and in the commutative setting of $\Sym(V)$, but allow more general subgroups $G$ of $GL(V)$ which are {\it compact}, and not necessarily finite. Their methods replace all averages of the form
$\frac{1}{|G|} \sum_{g \in G} (-)$ and
$\frac{1}{|G|^n} \sum_{\gg \in G^n}(-)$ with
integrations $\int_G(-)dg$ and $\int_{G^n} (-)dg_1 \cdots dg_n$, using the {\it Haar measures} on $G, G^n$.  The same replacements prove a version of Theorem~\ref{thm:wreath-Molien-theorem} when $\kk=\RR$ or $\CC$, allowing $G$ to be
a compact group, but keeping $P$ as a finite
permutation subgroup of $\symm_n$.
\end{remark}

\begin{remark}
As in Remark~\ref{rem:relative-invariants-superMolien}, when $\kk=\CC$ 
one can generalize Theorem~\ref{thm:wreath-Molien-theorem} to a statement
on relative invariants.
Given $\chi: P \rightarrow \CC$ any complex irreducible character of a permutation subgroup $P$ of $\symm_n$, define its {\it $\chi$-cycle index polynomial} in $\Lambda$ to be
$$
Z^\chi_P:=\frac{\chi(1)}{|G|} \sum_{\sigma \in P} \chi(\sigma^{-1}) p_{\lambda(\sigma)}.
$$
One can also view $\chi$ as
an irreducible character of the wreath product $P[G]$, via inflation through the quotient surjection $P[G] \twoheadrightarrow P[G]/G^n \cong P$, that is, $\chi((\sigma,\gg))=\chi(\sigma)$.
Then using \eqref{eq:general-relative-invariants-Molien}, the proof of Theorem~\ref{thm:wreath-Molien-theorem} similarly shows that the 
$\chi$-relative invariants $(\Sym_\pm)^{P[G],\chi}$
have this Hilbert series: 
\[
\Hilb((\Sym_\pm V^n)^{P[G],\chi},q,-u)
=Z^{\chi}_P\left[\, \Hilb((\Sym_\pm V)^G,q,-u)\, \right].
\]
Then \eqref{eq:commutative-Molien-for-permutation-groups}, \eqref{eq:anti-commutative-Molien-for-permutation-groups},
\eqref{eq:commutative-sign-Molien-for-permutation-groups}, \eqref{eq:anti-commutative-sign-Molien-for-permutation-groups}, \eqref{eq:diagonal-super-Molien-for-permutation-groups}, \eqref{eq:diagonal-sign-super-Molien-for-permutation-groups}
also generalize in this context, replacing $Z_P, Z^{\sgn}_P$ with $Z^\chi_P$.

An interesting case arises when $P=\symm_n$ itself, so that the irreducible
characters $\chi=\chi^\lambda$ are indexed by partitions $\lambda$ of $n$.  Here $Z_{\symm_n}^{\chi^\lambda}=f^\lambda \cdot s_\lambda$, where $s_\lambda$ is the {\it Schur function} indexed by $\lambda$,
and $f^\lambda=\chi^{\lambda}(1)$ is the number of {\it standard Young tableaux} of shape $\lambda$; see Stanley \cite[\S7.18, (7.86)]{Stanley-EC2}.
\end{remark}

\begin{remark}
 There is a quicker path to 
 the Hilbert series expression \eqref{eq:wreath-product-Molien} for the $P[G]$-invariants in Theorem~\ref{thm:wreath-Molien-theorem}
 using Corollary~\ref{cor:Molien-for-permutation-groups}, whenever one is in the purely commutative or purely anticommutative case, and the subgroup $G$ of $GL(V)$ also happens to be a {\it permutation} subgroup of $\symm_r$.  One can
 then apply a famous result of P\'olya \cite[\S27,(1.40)]{PolyaRead}, asserting that the wreath product $P[G]$ for two permutation groups $P,G$ has cycle index
% \begin{equation}\label{eq:Polya-wreath-cycle-index}
$Z_{P[G]}=Z_P[Z_G]$,
% \end{equation}
 where here the binary operation $(f,g) \mapsto f[g]$ denotes {\it plethystic composition} of symmetric functions $\Lambda \times \Lambda \rightarrow \Lambda$; see
Loehr and Remmel \cite{LoehrRemmel},  Macdonald \cite[\S I.8]{Macdonald} and Stanley \cite[Defn.~A.2.6]{Stanley-EC2}.  One can then deduce \eqref{eq:wreath-product-Molien}, e.g., in the commutative case,  by a calculation explained below:
\begin{align*}
\Hilb(\Sym(V^n)^{P[G]},q)
\overset{(a)}{=}Z_{P[G]}\left[\frac{1}{1-q}\right] 
&\overset{(b)}{=} Z_P\left[Z_G\right]
\left[\frac{1}{1-q} \right] \\
&\overset{(c)}{=} Z_P\left[Z_G\left[
\frac{1}{1-q}\right] \right] 
\overset{(d)}{=}Z_P\left[ \, \Hilb((\Sym_\pm V)^G,q)\, \right].
\end{align*}
Equality (a) applied \eqref{eq:commutative-Molien-for-permutation-groups} for $P[G]$.  
Equality (b) used 
 $Z_{P[G]}=Z_P[Z_G]$.  
Equality
(c) used the {\it associative compatibility} $
(f[g])\left[h(q)\right]=f\left[ g\left[h(q)\right] \right]$ of plethystic composition $f[g]$ for
 $f,g \in \Lambda$ with the plethystic substitutions $g\mapsto g[h(q)]$ for $h(q) \in \QQ[[q]]$.
Equality (d) applied \eqref{eq:commutative-Molien-for-permutation-groups} for $G$. 
 
\end{remark}

%%%%%%%%%%%%%%%%%%%%%%%%%%%%%%%%
\section{Proof of Theorem~\ref{thm:compiled-wreath-hilbs}}
\label{sec:compiled-hilb-section}
%%%%%%%%%%%%%%%%%%%%%%%%%%%%%%%%

Recall the statement of the theorem
from the Introduction.
\vskip.1in
\noindent
{\bf Theorem \ref{thm:compiled-wreath-hilbs}.}
{\it 
Let $V$ be a $\ZZ_2$-graded $\kk$-vector space, and $G$ a finite subgroup $G$ of $GL(V)$ that respects the grading.  When $\chr{\kk}=0$, one has these generating functions:
\begin{align}
%\label{eq:hilb-of-shuffle-algebra}
\sum_{n=0}^\infty
t^n 
\Hilb((\Sym_\pm V^n)^{\symm_n[G]},q,u)
&=
\displaystyle
\prod_{\substack{(i,j) \in \NN^2:\\j \text{ odd}}}
(1+t q^{i} u^{j} )^{a_{ij}}
\bigg/
\displaystyle
\prod_{\substack{(i,j) \in \NN^2:\\j \text{ even}}}
(1-t q^{i} u^{j} )^{a_{ij}}, \tag{6}\\
%\label{eq:hilb-of-signed-shuffle-algebra}
\sum_{n=0}^\infty
t^n 
\Hilb((\Sym_\pm V^n)^{\symm_n[G],\sgn},q,u)
&=
\displaystyle
\prod_{\substack{(i,j) \in \NN^2:\\j \text{ even}}}
(1+t q^{i} u^{j} )^{a_{ij}}
\bigg/
\displaystyle
\prod_{\substack{(i,j) \in \NN^2:\\j \text{ odd}}}
(1-t q^{i} u^{j} )^{a_{ij}}.\tag{7}
\end{align}
}
\begin{proof}
Note that the full symmetric group $\symm_n$ has cycle index 
$Z_{\symm_n}=\frac{1}{n!}\sum_{\sigma \in \symm_n} 
p_{\lambda(\sigma)}=:h_n$,
and signed cycle index
$
Z^{\sgn}_{\symm_n}=\frac{1}{n!}\sum_{\sigma \in \symm_n} \sgn(w) \cdot p_{\lambda(\sigma)}=:e_n,
$
with these generating functions in $\Lambda[[t]]$ (see Macdonald \cite[\S I.2,p.~25]{Macdonald}, Stanley \cite[(7.22)]{Stanley-EC2}):
\begin{align*}
\symmfnH(t)&:=\sum_{n=0}^\infty t^n Z_{\symm_n} = \sum_{n=0}^\infty t^n h_n =
\exp\left( \sum_{r=1}^\infty \frac{p_r}{r}t^r \right),\\
\symmfnE(t)&:=\sum_{n=0}^\infty t^n Z^{\sgn}_{\symm_n} = \sum_{n=0}^\infty t^n e_n =
\exp\left( - \sum_{r=1}^\infty \frac{p_r}{r}(-t)^r \right)
=\frac{1}{\symmfnH(-t)}.\\
\end{align*}
Then Theorem~\ref{thm:wreath-Molien-theorem} gives the first step in this plethystic derivation of \eqref{eq:hilb-of-shuffle-algebra}:
\begin{align*}
&\sum_{n=0}^\infty
t^n 
\Hilb((\Sym_\pm V^n)^{\symm_n[G]},q,-u)
=\sum_{n=0}^\infty
t^n Z_{\symm_n}\left[ \Hilb(\Sym(V)^G,q,-u) \right]\\
&=\symmfnH(t)\left[ 
\sum_{(i,i) \in \NN^2}
a_{ij} q^i (-u)^j
\right]
=\exp\left( \sum_{r=1}^\infty \frac{p_r}{r}t^r \right)
\left[
\sum_{\substack{(i,j) \in \NN^2:\\j \text{ even}}}a_{ij} q^{i} u^{j}
-\sum_{\substack{(i,j) \in \NN^2:\\j \text{ odd}}} a_{ij} q^{i} u^{j}
\right]\\
&=
\prod_{\substack{(i,j) \in \NN^2:\\j \text{ even}}} 
\exp\left( \sum_{r=1}^\infty \frac{q^{ri} u^{rj}}{r}t^r \right)^{a_{ij}} 
\cdot 
\prod_{\substack{(i,j) \in \NN^2:\\j \text{ odd}}} 
\exp\left( \sum_{r=1}^\infty \frac{q^{ri} u^{rj}}{r}t^r \right)^{-a_{ij}}\\
&=
\displaystyle
\prod_{\substack{(i,j) \in \NN^2:\\j \text{ odd}}}
(1-t q^{i} u^{j} )^{a_{ij}}
\bigg/
\displaystyle
\prod_{\substack{(i,j) \in \NN^2:\\j \text{ even}}}
(1-t q^{i} u^{j} )^{a_{ij}}
\end{align*}
where the last step used this calculation: 
$$
\exp\left( \sum_{r=1}^\infty \frac{q^{ri} u^{rj}}{r}t^r \right)
= \exp\left( \sum_{r=1}^\infty \frac{(t q^{i} u^{j})^r}{r} \right)
= \exp\left(-\log(1-tq^iu^j)\right)
=\frac{1}{1-tq^i u^j}.
$$
Replacing $u$ by $-u$ then completes the proof of \eqref{eq:hilb-of-shuffle-algebra}.
The derivation for \eqref{eq:hilb-of-signed-shuffle-algebra} is analogous, essentially applying the involution $\omega: \Lambda \rightarrow \Lambda$, prior to the plethystic substitution.
\end{proof}

\begin{example}
The commutative case of Theorem~\ref{thm:compiled-wreath-hilbs}
asserts that, given $U=\kk^r$ with $\chr(\kk)=0$, and any finite subgroup $G$ of $GL(U)$ having $\Hilb((\Sym U)^G,q)=\sum_{i=0}^\infty a_i q^i$, one will have
\begin{align}
\label{eq:Solomon-invariants-specialization}
\sum_{n=0}^\infty 
t^n 
\Hilb(\Sym(U^n)^{\symm_n[G]},q)
&=\prod_{i=0}^\infty (1-t q^i )^{-a_i},\\
\label{eq:Solomon-anti-invariants-specialization}
\sum_{n=0}^\infty 
t^n 
\Hilb(\Sym(U^n)^{\symm_n[G],\sgn},q)
&=\prod_{i=0}^\infty (1+t q^i )^{a_i}.
\end{align}
These \eqref{eq:Solomon-invariants-specialization},\eqref{eq:Solomon-anti-invariants-specialization} are special cases of Solomon's \cite[Thm.~3.15, (3.17),(3.18)]{Solomon-invariants}, where his $m=1$, his $W$ is the trivial $G$-representation,
his graded $G$-vector space $V$ is our $\Sym(U)$, and his $x$ is our $q$.
\end{example}

%%%%%%%%%%%%%%%%%%%%%%%%%%%%
\section{Shuffles, signed shuffles and
proof of Theorem~\ref{thm:shuffle-algebra-is-superpolynomial}}
\label{sec:shuffles}
%%%%%%%%%%%%%%%%%%%%%%%%%%%%

We recall the statement of Theorem~\ref{thm:shuffle-algebra-is-superpolynomial} from the Introduction,
which involved these direct sums:
\begin{align*}
R_G:=\bigoplus_{n=0}^\infty
(R_G)_n &\text{ where }(R_G)_n:=
(\Sym_\pm V^n)^{\symm_n[G]},\\
R^{\sgn}_G:=\bigoplus_{n=0}^\infty (R^{\sgn}_G)_n &\text{ where }
(R^{\sgn}_G)_n :=(\Sym_\pm V^n)^{\symm_n[G],\sgn}.
\end{align*}
It also involved the superpolynomial algebra
$\Sym_\pm U$ and superexterior algebra $\wedge_\pm U$ defined for
any $\ZZ_2$-graded $\kk$-vector space $U$, as quotients of the tensor algebra $T(U)$
in \eqref{eq:superpolynomial-quadratic-relations} and \eqref{eq:superpolynomial-shriek-quadratic-relations}.
In particular, we will apply these constructions to this $\ZZ_2$-graded
$\kk$-vector space $U$ defined in
\eqref{eq:ensuing-Z2-grading}:
\begin{equation}
%\label{eq:ensuing-Z2-grading}
U:=(\Sym_\pm V)^G
= \underbrace{
\bigoplus_{\substack{(i,j) \in \NN^2:\\j\text{ even}}}
(\Sym_\pm V)_{ij}^G}_{U_{\bar{0}}
%=((\Sym_\pm V)^G)_{\bar{0}}
} 
\quad \oplus \quad
\underbrace{\bigoplus_{\substack{(i,j) \in \NN^2:\\j\text{ odd}}}
(\Sym_\pm V)_{ij}^G}_{U_{\bar{1}}
%=((\Sym_\pm V)^G)_{\bar{1}}
}.\tag{8} 
\end{equation}

\vskip.1in
\noindent
{\bf Theorem~\ref{thm:shuffle-algebra-is-superpolynomial}.}
{\it 
Under the hypotheses of Theorem~\ref{thm:compiled-wreath-hilbs},
one can define on $R_G, R^{\sgn}_G$
graded products,
\begin{itemize}
\item a shuffle product $\shuffle:(R_G)_a \otimes (R_G)_b \rightarrow (R_G)_{a+b}$, and 
\item a signed shuffle product  $\shuffle^{\sgn}: (R^{\sgn}_G)_a \otimes (R^{\sgn}_G)_b \rightarrow (R^{\sgn}_G)_{a+b}$,
\end{itemize}
for which one has $\ZZ^3$-graded algebra isomorphisms
\begin{align*}
%\label{eq:shuffle-algebra-isomorphism}
\Sym_\pm \left( (\Sym_\pm V)^G \right)
& \cong R_G \text{ with its  }\shuffle\text{ product},\\
%\label{eq:signed-shuffle-algebra-isomorphism}
\wedge_\pm\left( (\Sym_\pm V)^G \right)
& \cong R^{\sgn}_G \text{ with its  }\shuffle^{\sgn}\text{ product}.
\end{align*}
}
\vskip.1in

Even the following (commutative) invariant theory corollary is new, as far as we know.
\begin{cor}
In the $V=V_{\bar{0}}$ case of Theorem~\ref{thm:shuffle-algebra-is-superpolynomial}, the shuffle product
$\shuffle$
on 
$$
R_G=\bigoplus_{n=0}^\infty \Sym(V^n)^{\symm_n[G]}
$$
makes it isomorphic to the polynomial algebra $\Sym((\Sym V)^G)$ as a bigraded commutative algebra.
\end{cor}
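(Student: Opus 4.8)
The plan is to obtain this corollary as the direct specialization $V=V_{\bar{0}}$ (equivalently $V_{\bar{1}}=0$) of the isomorphism \eqref{eq:shuffle-algebra-isomorphism} in Theorem~\ref{thm:shuffle-algebra-is-superpolynomial}, and then to observe that every piece of super-structure degenerates to ordinary commutative data. First I would record that when $V_{\bar{1}}=0$ one has $\Sym_\pm V=\Sym(V_{\bar{0}})=\Sym V$, so that $(R_G)_n=(\Sym_\pm V^n)^{\symm_n[G]}$ is exactly the purely commutative object $\Sym(V^n)^{\symm_n[G]}$ named in the statement. Next I would inspect the $\ZZ_2$-grading \eqref{eq:ensuing-Z2-grading} on $U:=(\Sym_\pm V)^G=(\Sym V)^G$: since $\wedge^j(V_{\bar{1}})=0$ for every $j>0$, each nonzero graded piece $(\Sym_\pm V)_{ij}^G$ has $j=0$, whence $U=U_{\bar{0}}$ and $U_{\bar{1}}=0$. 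In particular the $\ZZ^3$-grading by $(n,i,j)$ collapses to the $\ZZ^2$-bigrading by $(n,i)$ that the corollary refers to.

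With $U$ concentrated in even $\ZZ_2$-degree, the free supercommutative algebra degenerates: $\Sym_\pm U=\Sym(U_{\bar{0}})\otimes\wedge(U_{\bar{1}})=\Sym(U_{\bar{0}})=\Sym((\Sym V)^G)$ is just the ordinary polynomial (symmetric) algebra on the underlying graded vector space of $(\Sym V)^G$, and it is honestly commutative, since supercommutativity imposes no anticommutation when there are no odd generators. Transporting this identification across the isomorphism \eqref{eq:shuffle-algebra-isomorphism} then exhibits $R_G$ with its $\shuffle$ product as a bigraded commutative algebra isomorphic to $\Sym((\Sym V)^G)$.

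The one point meriting care --- and the nearest thing to an obstacle --- is confirming that all the sign conventions built into the $\shuffle$ product and into the defining relations of $\Sym_\pm U$ in Section~\ref{sec:shuffles} indeed trivialize here. This is immediate once one notes that every homogeneous element of $U=U_{\bar{0}}$ carries trivial $\ZZ_2$-degree, so each factor $(-1)^{\bar{i}\cdot\bar{j}}$ occurring in a shuffle or in the presentation \eqref{eq:superpolynomial-quadratic-relations} equals $+1$; thus the $\shuffle$ product is genuinely commutative and no appeal to the odd half of Theorem~\ref{thm:shuffle-algebra-is-superpolynomial} is needed.
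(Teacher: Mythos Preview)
Your argument is correct and matches the paper's treatment: the corollary is stated immediately after Theorem~\ref{thm:shuffle-algebra-is-superpolynomial} with no separate proof, because it is precisely the specialization $V=V_{\bar{0}}$ you describe, where $\Sym_\pm V=\Sym V$, the space $U=(\Sym V)^G$ is concentrated in $\ZZ_2$-degree $\bar{0}$, and hence $\Sym_\pm U=\Sym U$ with all supersigns trivializing.
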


To define $\shuffle, \shuffle^{\sgn}$,
we start with the special case
where $G=\{1_V\}$, and consider
\begin{align*}  
R:&=R_{\{1_V\}} 
=\bigoplus_{n=0}^\infty R_n \text{ where }
R_n:=(\Sym_\pm V^n)^{\symm_n},\\   
R^{\sgn}&:=R^{\sgn}_{\{1_V\}} 
=\bigoplus_{n=0}^\infty
R^{\sgn}_n \text{ where }
R^{\sgn}_n:=
(\Sym_\pm V^n)^{\symm_n,\sgn}.
\end{align*}

It will be helpful to recall some definitions and facts on Young subgroups,
as defined in Example~\ref{ex:motivating-Thibon-example}, along with their {\it minimum-length coset representatives}, and their relation to
shuffles.

\begin{definition} \rm
For each {\it composition} $\alpha=(\alpha_1,\ldots,\alpha_\ell)$ with $\alpha_i \in \{1,2,\ldots\}$ and
$\sum_{i=1}^\ell \alpha_i=n$, consider the {\it Young subgroup} $\symm_\alpha \cong
\symm_{\alpha_1} \times \cdots \times \symm_{\alpha_\ell}$ inside the symmetric group $\symm_n$, consisting of permutations that independently permute
the values within the blocks of this set  partition:
\begin{equation}
\label{eq:composition-set-partition}
\begin{array}{rl}
\{1,2,\ldots,n\}
&=\{1,2,\ldots,\alpha_1\} \\
&\quad \sqcup \{\alpha_1+1, \alpha_1+2,\ldots,
\alpha_1+\alpha_2\}\\
&\quad \sqcup \{\alpha_1+\alpha_2+1, \alpha_1+\alpha_2+2,\ldots,
\alpha_1+\alpha_2+\alpha_3\}\\
& \qquad \vdots \\
&\quad \sqcup \{ n-\alpha_\ell+1,n-\alpha_\ell+2,\ldots,n \}.
\end{array}
\end{equation}
\end{definition}

We will regard 
$W=\symm_n$ as a {\it Coxeter group} 
with {\it Coxeter system} $(W,S)$ in which $S=\{s_1,\ldots,s_{n-1}\}$ for $s_i=(i,i+1)$.
Each $\sigma$ in $\symm_n$ has {\it (Coxeter group) length} 
\begin{align*}
\ell(\sigma)&:=\min\{\ell: \sigma=s_{i_1} \cdots s_{i_\ell} \text{ for some }s_{i_j} \in S\}\\
&=|\{(i,j): 1\leq i<j\leq n \text{ and }\sigma(i) > \sigma(j)\}|.
\end{align*}
This length also determines
the {\it sign} of a permutation: $\sgn(\sigma)=(-1)^{\ell(w)}$. One can then view the Young subgroup $\symm_\alpha$ as a {\it standard parabolic subgroup}, with a set $\symm^\alpha$ of distinguished
{\it minimum-length coset representatives}
for the cosets $\symm_\alpha \sigma$ in $\symm_\alpha \backslash \symm_n$;
see
Bj\"orner-Brenti \cite[\S 2.4]{BjornerBrenti}, Humphreys [\S 1.10]\cite{Humphreys}.
This gives each $\sigma \in \symm_n$ a unique 
{\it length-additive decomposition} 
$$
\sigma = \sigma_\alpha \cdot \sigma^{\alpha} 
\text{ where }   \sigma_\alpha \in \symm_\alpha, \sigma^{\alpha} \in \symm^\alpha,
$$
and $\ell(\sigma)= \ell(\sigma_\alpha)+\ell(\sigma^\alpha)$.
One can view the elements of $\symm^{\alpha}$ as {\it shuffles} of the
blocks in the partition \eqref{eq:composition-set-partition},
where the entries of each block appear left-to-right. 

\begin{definition} \rm
The {\it unsigned} and {\it signed 
$\alpha$-shuffles} are these elements of
the group algebra $\kk \symm_n$:
$$
\sh_\alpha:=\sum_{\sigma \in \symm^\alpha} \sigma 
\quad \text{ and } \quad
\sh^{\sgn}_\alpha:=\sum_{\sigma \in \symm^\alpha} \sgn(\sigma) \cdot \sigma.\\
$$
\end{definition}

\begin{example}
\label{ex:shuffle-example}
For $\alpha=(2,2)$, one has this $(2,2)$-shuffles,and signed $(2,2)$-shuffle inside $\kk \symm_4$:
\begin{align*}
\sh_{2,2}
&= 
+\left(
\begin{matrix}
1234\\12{\color{darkred}34}
\end{matrix}
\right) 
+
\left(
\begin{matrix}
1234\\1{\color{darkred}3}2{\color{darkred}4}
\end{matrix}
\right)
+
\left(
\begin{matrix}
1234\\1{\color{darkred}34}2
\end{matrix}
\right)
+
\left(
\begin{matrix}
1234\\{\color{darkred}3}12{\color{darkred}4}
\end{matrix}
\right)
+
\left(
\begin{matrix}
1234\\{\color{darkred}3}1{\color{darkred}4}2
\end{matrix}
\right) 
+\left(
\begin{matrix}
1234\\{\color{darkred}34}12
\end{matrix}
\right),\\
\sh^{\sgn}_{2,2}
&= 
+\left(
\begin{matrix}
1234\\12{\color{darkred}34}
\end{matrix}
\right) 
{\color{darkred}-}
\left(
\begin{matrix}
1234\\1{\color{darkred}3}2{\color{darkred}4}
\end{matrix}
\right)
+
\left(
\begin{matrix}
1234\\1{\color{darkred}34}2
\end{matrix}
\right)
+
\left(
\begin{matrix}
1234\\{\color{darkred}3}12{\color{darkred}4}
\end{matrix}
\right)
{\color{darkred}-}
\left(
\begin{matrix}
1234\\{\color{darkred}3}1{\color{darkred}4}2
\end{matrix}
\right) 
+\left(
\begin{matrix}
1234\\{\color{darkred}34}12
\end{matrix}
\right) 
\end{align*}
\end{example}

One can then define for compositions $\alpha$ of $n$ these maps $\Sym_\pm V^{\alpha_1} \otimes 
\cdots \otimes
\Sym_\pm V^{\alpha_\ell}
\rightarrow
\Sym_\pm V^n$:
\begin{itemize}
\item 
A map $\mu_\alpha$ which is this composition
$$
\mu_\alpha: \Sym_\pm V^{\alpha_1} \otimes 
\cdots \otimes
\Sym_\pm V^{\alpha_\ell}
\overset{\iota}{\hookrightarrow} 
\Sym_\pm V^n \otimes
\cdots \otimes \Sym_\pm V^n
\overset{\mu}{\rightarrow}
\Sym_\pm V^n
$$
where $\iota$ comes from inclusion $V^{\alpha_i} \hookrightarrow V^n=\oplus_{i=1}^\ell V^{\alpha_i}$, and $\mu$ is multiplication in $\Sym_\pm V^n$.
\item
  Two maps $\shuffle_\alpha, \shuffle^{\sgn}_\alpha$ 
  which follow $\mu_\alpha$ with either
  $\sh_\alpha$ or $\sh^{\sgn}_\alpha$:
\begin{align*}
\shuffle_\alpha&:=\sh_\alpha \circ \mu_\alpha\\
\shuffle^{\sgn}_\alpha&:=\sh^{\sgn}_\alpha \circ \mu_\alpha.\\
\end{align*}
\end{itemize}

\begin{example}
Take $r_0=r_1=1$ as in Example~\ref{ex:superspace}, so that $V=\spn_\kk\{x, \theta\}$ and 
\begin{align*}
\Sym_\pm V&=\Sym_\pm(x,\theta)=\kk[x] \otimes \wedge(\theta),\\
(\Sym_\pm V^n)
&=\Sym_\pm(x_1,\ldots,x_n,\theta_1,\ldots,\theta_n)
=\kk[x_1,\ldots,x_n] \otimes \wedge(\theta_1,\ldots,\theta_n)
\end{align*}
with the diagonal action of $\symm_n$ permuting subscripts:  $\sigma(x_i)=x_{\sigma^{-1}(i)}$ and  
$\sigma(\theta_i)=\theta_{\sigma^{-1}(i)}$.
Here is an example of the map
$\shuffle^{\sgn}_{2,2}:
\Sym_\pm(V^2) \otimes \Sym_\pm(V^2)
 \longrightarrow
\Sym_\pm(V^4),
$
using $\sh^{\sgn}_{2,2}$ from
Example~\ref{ex:shuffle-example}:
\begin{align*}
x_1^2 x_2 \theta_2 \quad \shuffle^{\sgn}_{2,2} \quad 
{\color{darkred}x_1^5 x_2^7 \theta_1 \theta_2}
\,\,\, = \,\,\, \sh^{\sgn}_{2,2}\left(
x_1^2 x_2 \theta_2 \cdot 
{\color{darkred}x_3^5 x_4^7 \theta_3 \theta_4}
\right)
%%%%%%%%%
&=x_1^2 x_2 \theta_2 \cdot 
{\color{darkred}x_3^5 x_4^7 \theta_3 \theta_4}\\
&\quad - x_1^2 x_3 \theta_3 \cdot 
{\color{darkred}x_2^5 x_4^7 \theta_2 \theta_4}\\
&\quad + x_1^2 x_4 \theta_4 \cdot 
{\color{darkred}x_2^5 x_3^7 \theta_2 \theta_3}\\
&\quad + x_2^2 x_3 \theta_3 \cdot 
{\color{darkred}x_1^5 x_4^7 \theta_1 \theta_4}\\
&\quad - x_2^2 x_4 \theta_4 \cdot 
{\color{darkred}x_1^5 x_3^7 \theta_1 \theta_3}\\
&\quad + x_3^2 x_4 \theta_4 \cdot 
{\color{darkred}x_1^5 x_2^7 \theta_1 \theta_2}.
\end{align*}
\begin{comment}
%This was the old version where I (Vic) think I had confused the $\sigma$ versus $\sigma^{-1}$!   
&\quad - x_1^2 x_3 \theta_3 \cdot 
{\color{darkred}x_2^5 x_4^7 \theta_2 \theta_4}\\
%%
&\quad + x_1^2 x_3 \theta_3 \cdot 
{\color{darkred}x_4^5 x_2^7 \theta_4 \theta_2}\\
%%
&\quad + x_3^2 x_1 \theta_1 \cdot 
{\color{darkred}x_2^5 x_4^7 \theta_2 \theta_4}\\
%%
&\quad - x_3^2 x_1 \theta_1 \cdot 
{\color{darkred}x_4^5 x_2^7 \theta_4 \theta_2}\\
%%
&\quad +x_3^2 x_4 \theta_4 \cdot 
{\color{darkred}x_1^5 x_2^7 \theta_1 \theta_2}.
\end{comment}
The (unsigned) shuffle product
$x_1^2 x_2 \theta_2 \,\, \shuffle_{2,2} \,\,
x_1^5 x_2^7 \theta_1 \theta_2$
has the same six summands with plus signs.
\end{example}

\begin{prop}
\label{prop:shuffle-product-well-defined}
For any $\ZZ_2$-graded vector space $V=V_{\bar{0}} \oplus V_{\bar{1}}$,
these composite maps
\begin{align}
\label{eq:shuffle-product-as-composite}
\shuffle_{a,b}&:=\sh_{a,b} \circ \mu_{a,b}:
\Sym_\pm V^a \otimes \Sym_\pm V^b
\longrightarrow \Sym_\pm V^{a+b},\\
\label{eq:signed-shuffle-product-as-composite}
\shuffle^{\sgn}_{a,b}&:=\sh^{\sgn}_{a,b} \circ \mu_{a,b}:
\Sym_\pm V^a \otimes \Sym_\pm V^b
\longrightarrow \Sym_\pm V^{a+b}.
\end{align}
restrict to well-defined maps
\begin{align*}
\shuffle_{a,b}:& R_a \otimes R_{b}
\longrightarrow
R_{a+b},\\
\shuffle^{\sgn}_{a,b}:& R^{\sgn}_a \otimes R^{\sgn}_{b}
\longrightarrow
R^{\sgn}_{a+b}
\end{align*}
making both
$R, R^{\sgn}$
into associative, $\ZZ^3$-graded $\kk$-algebras.
\end{prop}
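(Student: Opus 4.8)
The plan is to verify three things in turn: that the composites $\shuffle_{a,b}=\sh_{a,b}\circ\mu_{a,b}$ and $\shuffle^{\sgn}_{a,b}=\sh^{\sgn}_{a,b}\circ\mu_{a,b}$ restrict to maps $R_a\otimes R_b\to R_{a+b}$ and $R^{\sgn}_a\otimes R^{\sgn}_b\to R^{\sgn}_{a+b}$; that the resulting products are associative; and that they are $\ZZ^3$-graded and unital. For the first point, I would begin by recording that for $f\in R_a$, $g\in R_b$ the element $h:=\mu_{a,b}(f\otimes g)$ is $\symm_{(a,b)}$-invariant, and for $f\in R^{\sgn}_a$, $g\in R^{\sgn}_b$ it is a $(\symm_{(a,b)},\sgn)$-relative invariant. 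Indeed $h=\iota_1(f)\cdot\iota_2(g)$, with $\iota_1(f)$ involving only the first $a$ copies of $V$ in $V^{a+b}$ and $\iota_2(g)$ only the last $b$; each $u=(u_1,u_2)\in\symm_a\times\symm_b$ acts on $\Sym_\pm V^{a+b}$ as a grading-preserving algebra automorphism carrying each of these two blocks of variables into itself (the blocks are never transposed past one another, so no Koszul sign intervenes), whence $u\cdot h=\iota_1(u_1 f)\cdot\iota_2(u_2 g)$, equal to $h$ when $f,g$ are invariant and to $\sgn(u_1)\sgn(u_2)\,h=\sgn(u)\,h$ when they are $\sgn$-relative invariants. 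It then remains to see that $\sh_{a,b}$ sends $\symm_{(a,b)}$-invariants of $\Sym_\pm V^{a+b}$ into $\symm_{a+b}$-invariants, and $\sh^{\sgn}_{a,b}$ sends $(\symm_{(a,b)},\sgn)$-relative invariants into $(\symm_{a+b},\sgn)$-relative invariants; this is the usual transfer argument, using the length-additive factorization $w=w_\alpha\cdot w^\alpha$ ($w_\alpha\in\symm_\alpha$, $w^\alpha\in\symm^\alpha$, $\alpha=(a,b)$) to see that on $\symm_{(a,b)}$-invariants $\sh_{a,b}$ coincides up to the nonzero scalar $1/|\symm_{(a,b)}|$ with $\sum_{w\in\symm_{a+b}}w$, and inserting $\sgn(w)=\sgn(w_\alpha)\sgn(w^\alpha)$ throughout for the signed case.

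For associativity, I would introduce the triple shuffles $\shuffle_{a,b,c}:=\sh_{(a,b,c)}\circ\mu_{(a,b,c)}$ and $\shuffle^{\sgn}_{a,b,c}:=\sh^{\sgn}_{(a,b,c)}\circ\mu_{(a,b,c)}$ and prove
$$
\shuffle_{a+b,c}\circ(\shuffle_{a,b}\otimes\mathrm{id})=\shuffle_{a,b,c}=\shuffle_{a,b+c}\circ(\mathrm{id}\otimes\shuffle_{b,c}),
$$
together with the identical identity for the signed maps; restricting to $R_a\otimes R_b\otimes R_c$ and $R^{\sgn}_a\otimes R^{\sgn}_b\otimes R^{\sgn}_c$ then gives associativity of $\shuffle$ and $\shuffle^{\sgn}$. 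Each half of this display rests on two inputs. First, associativity of multiplication in $\Sym_\pm V^{a+b+c}$ and transitivity of the inclusions give $\mu_{(a,b,c)}=\mu_{(a+b,c)}\circ(\mu_{(a,b)}\otimes\mathrm{id})$, and any $\sigma\in\symm_{a+b}\subseteq\symm_{a+b+c}$ commutes past $\mu_{(a+b,c)}(-\otimes h)$ (again because it acts as an algebra automorphism fixing the last $c$ copies of $V$), so $\sh_{a,b}$, acting on the first $a+b$ tensor factors, passes through $\mu_{(a+b,c)}$. Second, the Coxeter-theoretic factorization of minimum-length coset representatives for the nested parabolic subgroups $\symm_{(a,b,c)}\subseteq\symm_{(a+b,c)}\subseteq\symm_{a+b+c}$, namely $\symm^{(a,b,c)}=\{\rho\tau:\rho\in\symm^{(a,b)},\ \tau\in\symm^{(a+b,c)}\}$ with $\ell(\rho\tau)=\ell(\rho)+\ell(\tau)$ (and the mirror factorization for $\symm_{(a,b,c)}\subseteq\symm_{(a,b+c)}\subseteq\symm_{a+b+c}$), translates into the group-algebra identity presenting $\sh_{(a,b,c)}$ as the relevant product of $\sh_{(a,b)}$ and $\sh_{(a+b,c)}$, and analogously for $\sh^{\sgn}$ using multiplicativity of $\sgn$.

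Finally, the refinement $R_n=\bigoplus_{(i,j)\in\NN^2}(\Sym_\pm V^n)^{\symm_n}_{ij}$ promotes the $n$-grading to a $\ZZ^3$-grading, and since $\mu_{a,b}$ is bigraded and every $\sigma\in\symm_{a+b}$ preserves the bidegree $(i,j)$ (permuting copies of $V$ changes neither the polynomial degree $i$ nor the exterior degree $j$), $\shuffle_{a,b}$ and $\shuffle^{\sgn}_{a,b}$ are $\ZZ^3$-graded; the same applies to $R^{\sgn}$. Since $R_0=R^{\sgn}_0=\kk$ and $\shuffle_{0,n},\shuffle_{n,0}$ reduce to scalar multiplication, $1\in R_0$ is a two-sided unit. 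I expect the associativity step to be the main obstacle: the content there is getting the nested-parabolic coset-representative factorization — and, in the $\shuffle^{\sgn}$ case, the accompanying signs — exactly right, and then tracking how $\sh_{a,b}$ slides through $\mu_{(a+b,c)}$; the remaining verifications are formal once the invariance of $\mu_{a,b}(f\otimes g)$ has been established.
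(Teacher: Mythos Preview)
Your plan is correct and matches the paper's own proof almost step for step: both show that $\mu_{a,b}(f\otimes g)$ inherits $\symm_{(a,b)}$-(anti)invariance from $f,g$, then invoke the coset decomposition $\symm_{a+b}=\bigsqcup_{\sigma\in\symm^{(a,b)}}\symm_{(a,b)}\,\sigma$ to pass to full $\symm_{a+b}$-(anti)invariance, and both reduce associativity to the triple-shuffle identity $\shuffle_{a+b,c}\circ(\shuffle_{a,b}\otimes\mathrm{id})=\shuffle_{a,b,c}=\shuffle_{a,b+c}\circ(\mathrm{id}\otimes\shuffle_{b,c})$ via the nested-parabolic factorization of minimum-length coset representatives. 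Your treatment is in fact slightly more thorough, since you also check the $\ZZ^3$-grading and the unit, which the paper leaves implicit.

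One small caveat: when you write that on $\symm_{(a,b)}$-invariants $\sh_{a,b}$ ``coincides up to the nonzero scalar $1/|\symm_{(a,b)}|$ with $\sum_{w\in\symm_{a+b}}w$,'' you are implicitly inverting $a!\,b!$ in $\kk$, whereas the proposition is stated only under $\chr(\kk)\neq 2$. The paper sidesteps this by arguing directly from the coset bijection (left multiplication by any $w\in\symm_{a+b}$ permutes the cosets $\symm_{(a,b)}\backslash\symm_{a+b}$, so permutes the summands of $\sh_{a,b}(h)$ when $h$ is $\symm_{(a,b)}$-invariant), which needs no division. You can make the same adjustment with no change to your outline.
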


Before proving this, let us illustrate it with an example.

\begin{example}
Take $r_0=3,r_1=2$, and  to avoid column indices, 
name the $\kk$-bases 
of $V_{\bar{0}}$ by $\{x,y,z\}$ and of $V_{\bar{1}}$ by $\{\alpha,\beta\}$.  Then
\begin{align*}
R_1&=\Sym_\pm V=\Sym_\pm(x,y,z,\alpha,\beta),\\
R_n&=
\Sym_\pm(V^n)^{\symm_n}\\
&=\Sym_\pm(x_1,\ldots,x_n, \,\,
y_1,\ldots,y_n,\,\,
z_1,\ldots,z_n,\,\,
\alpha_1,\ldots,\alpha_n,\,\,
\beta_1,\ldots,\beta_n)^{\symm_n}.
\end{align*}
Here is a particular example of 
$\shuffle_{1,2}: R_1 \otimes R_2 \rightarrow R_3$:
\begin{align*}
(x_1^5 y_1^5 z_1^3 \alpha_1) \shuffle
\color{darkred}{(z_1 \beta_2 + z_2 \beta_1)} 
&=\sum_{ \sigma \in \{1{\color{darkred}23}, \,\, {\color{darkred}2}1{\color{darkred}{3}}, \,\, {\color{darkred}23}1\}}
\sigma \left( (x_1^5 y_1^5 z_1^3 \alpha_1) \cdot 
\color{darkred}{(z_2 \beta_3 + z_3 \beta_2)}
\right)\\
&=\sum_{ \sigma \in \{1{\color{darkred}23}, \,\, {\color{darkred}2}1{\color{darkred}{3}}, \,\, {\color{darkred}23}1\}}
\sigma \left( x_1^5 y_1^5 z_1^3 {\color{darkred}{z_2}} \alpha_1 {\color{darkred}{\beta_3}}
+x_1^5 y_1^5 z_1^3 {\color{darkred}{z_3}} \alpha_1 {\color{darkred}{\beta_2}} \right)\\
&=x_1^5 y_1^5 z_1^3 {\color{darkred}{z_2}} \alpha_1 {\color{darkred}{\beta_3}}
+x_1^5 y_1^5 z_1^3 {\color{darkred}{z_3}} \alpha_1 {\color{darkred}{\beta_2}}\\
&+x_2^5 y_2^5 z_2^3 {\color{darkred}{z_1}} \alpha_2 {\color{darkred}{\beta_3}}
+x_2^5 y_2^5 z_2^3 {\color{darkred}{z_3}} \alpha_2 {\color{darkred}{\beta_1}}\\
&+x_3^5 y_3^5 z_3^3 {\color{darkred}{z_1}} \alpha_3 {\color{darkred}{\beta_2}}
+x_3^5 y_3^5 z_3^3 {\color{darkred}{z_2}} \alpha_3 {\color{darkred}{\beta_1}}.
\end{align*}
\end{example}

\begin{proof}[Proof of Proposition~\ref{prop:shuffle-product-well-defined}.]
The $\kk$-linearity of
$\shuffle_{a,b}, \shuffle^{\sgn}_{a,b}$
follows because $\mu_{a,b}$ is $\kk$-linear, and both $\sh_{a,b}$ and $\sh^{\sgn}_{a,b}$ apply a (signed) sum
of $\kk$-algebra automorphisms $\sigma$, which all act $\kk$-linearly.

Associativity of $\shuffle, \shuffle^{\sgn}$ arises 
due to these equalities:
\begin{align*}
\shuffle_{a+b,c} \circ (\shuffle_{a,b} \otimes 1_{R_c})
&= \shuffle_{a,b,c}
= \shuffle_{a,b+c} \circ (1_{R_a} \otimes \shuffle_{b,c}),\\
\shuffle^{\sgn}_{a+b,c} \circ (\shuffle^{\sgn}_{a,b} \otimes 1_{R_c})
&= \shuffle^{\sgn}_{a,b,c}
= \shuffle^{\sgn}_{a,b+c} \circ (1_{R_a} \otimes \shuffle^{\sgn}_{b,c}).
\end{align*}
For example, the key point in checking the equality 
$\shuffle^{\sgn}_{a+b,c} \circ (\shuffle^{\sgn}_{a,b} \otimes 1_{R_c})= \shuffle^{\sgn}_{a,b,c}$, is as follows. 
The composition $(a,b,c)$ refines the
composition $(a+b,c)$, giving a tower of parabolic/Young subgroups $$
\symm_{a,b,c} < \symm_{a+b,c} < \symm_{a+b+c},
$$
each of which is a Coxeter group in its own
right, with compatible Coxeter generators. This gives for each
minimum-length coset representative $\sigma \in \symm^{a,b,c}$ a 
unique factorization $\sigma=\sigma' \cdot \sigma''$, where $\sigma'$ lies $\symm^{a+b,c}$ and
$\sigma''$ is a minimum-length coset representatives for
$\symm_{a,b,c}\backslash \symm^{a+b,c}$;  compare with the proof of Bj\"orner-Brenti \cite[Cor.~2.4.6]{BjornerBrenti}.

The fact that the maps $\shuffle_{a,b}$ and $\shuffle^{\sgn}_{a,b}$ restrict from $\Sym_\pm V^a \otimes  \Sym_\pm V^b \rightarrow \Sym V^{a+b}$ to well-defined maps 
$R_a \otimes R_{b} \rightarrow R_{a+b}$
and $R^{\sgn}_a \otimes R^{\sgn}_{b} \rightarrow R^{\sgn}_{a+b}$ is argued as follows, say for $R^{\sgn}_{a+b}$. For any $\symm_a$-antisymmetric element
$A \in R^{\sgn}_a$ and $\symm_b$-antisymmetric element
$B \in R^{\sgn_b}$, the product $\mu_{a,b}(A \otimes B)$ will be
antisymmetric under the action of all $\sigma$ in 
the Young subgroup $\symm_{a,b}=\symm_a \times \symm_b$. But then $\shuffle_{a,b}(A \otimes B) = \sh_{a,b} (\mu_{a,b}(A \otimes B))$
will be $\symm_{a+b}$-antisymmetric, due to the definition of $\sh_{a,b}$ as a signed sum over the coset representatives
$\symm^{a,b}$ for $\symm_{a,b} \backslash \symm_{a+b}$. 
\end{proof}

\begin{prop}
\label{prop:degree-one-generation}
For any $\ZZ_2$-graded $\kk$-vector space $V=V_{\bar{0}} \oplus V_{\bar{1}}$ with $\chr(\kk)=0$,
both $\ZZ$-graded algebras $R=\bigoplus_{n=0}^\infty R_n$ and $R^{\sgn}=\bigoplus_{n=0}^\infty R^{\sgn}_n$, are generated in degree one, by $\Sym_\pm V$.
\end{prop}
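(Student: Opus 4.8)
The plan is to show directly that every $R_n$ is $\kk$-spanned by iterated $\shuffle$-products of elements of $R_1=\Sym_\pm V$, and likewise that every $R^{\sgn}_n$ is $\kk$-spanned by iterated $\shuffle^{\sgn}$-products of elements of $R^{\sgn}_1=\Sym_\pm V$. For $1\le i\le n$ write $\iota_i\colon\Sym_\pm V=\Sym_\pm V^1\hookrightarrow\Sym_\pm V^n$ for the inclusion into the $i$-th summand of $V^n=V\oplus\cdots\oplus V$. Since the Young subgroup $\symm_{(1^n)}$ attached to the composition $(1,1,\dots,1)$ is trivial, its set of minimum-length coset representatives is all of $\symm_n$, so $\sh_{(1^n)}=\sum_{\sigma\in\symm_n}\sigma$ and $\sh^{\sgn}_{(1^n)}=\sum_{\sigma\in\symm_n}\sgn(\sigma)\,\sigma$. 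Feeding this into the associativity identities $\shuffle_{a+b,c}\circ(\shuffle_{a,b}\otimes 1)=\shuffle_{a,b,c}$ and their signed analogues established in the proof of Proposition~\ref{prop:shuffle-product-well-defined}, an induction on $n$ yields, for $f_1,\dots,f_n\in\Sym_\pm V$,
\begin{align*}
f_1\shuffle\cdots\shuffle f_n &=\sum_{\sigma\in\symm_n}\sigma\bigl(\iota_1(f_1)\cdots\iota_n(f_n)\bigr),\\
f_1\shuffle^{\sgn}\cdots\shuffle^{\sgn}f_n &=\sum_{\sigma\in\symm_n}\sgn(\sigma)\,\sigma\bigl(\iota_1(f_1)\cdots\iota_n(f_n)\bigr),
\end{align*}
where the products are taken in $\Sym_\pm V^n$.

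Next I would observe that the elements $\iota_1(f_1)\cdots\iota_n(f_n)$, as $f_1,\dots,f_n$ range over $\Sym_\pm V$, span all of $\Sym_\pm V^n$ over $\kk$. Indeed, iterating the graded tensor decompositions $\Sym(A\oplus B)\cong\Sym A\otimes\Sym B$ and $\wedge(A\oplus B)\cong\wedge A\otimes\wedge B$ identifies $\Sym_\pm V^n$ with $(\Sym_\pm V)^{\otimes n}$ as a $\ZZ_2$-graded $\kk$-vector space, carrying $\iota_i(f)$ to the pure tensor with $f$ in the $i$-th factor and $1$ in the others; thus $\iota_1(f_1)\cdots\iota_n(f_n)$ becomes $\pm f_1\otimes\cdots\otimes f_n$, and such pure tensors span. (Concretely: every monomial of $\Sym_\pm V^n$ factors according to the slot of each variable, hence lies in the span of such products.)

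To finish I would use $\chr(\kk)=0$. Then $e:=\frac{1}{n!}\sum_{\sigma\in\symm_n}\sigma$ and $e^{\sgn}:=\frac{1}{n!}\sum_{\sigma\in\symm_n}\sgn(\sigma)\,\sigma$ are idempotents in $\kk\symm_n$ projecting $\Sym_\pm V^n$ onto $(\Sym_\pm V^n)^{\symm_n}=R_n$ and onto $(\Sym_\pm V^n)^{\symm_n,\sgn}=R^{\sgn}_n$ respectively. Combining the two previous paragraphs, the $\kk$-span of $\{f_1\shuffle\cdots\shuffle f_n : f_i\in\Sym_\pm V\}$ equals $n!\cdot e\bigl(\Sym_\pm V^n\bigr)=R_n$, and likewise the $\kk$-span of the $n$-fold $\shuffle^{\sgn}$-products equals $n!\cdot e^{\sgn}\bigl(\Sym_\pm V^n\bigr)=R^{\sgn}_n$. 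Since $R_0=R^{\sgn}_0=\kk$, this exhibits $R$ and $R^{\sgn}$ as generated in degree one by $\Sym_\pm V$.

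The only step needing genuine care is the first one: checking that the iterated shuffle product of degree-one elements collapses to the full (signed) symmetrizer $\sh_{(1^n)}$, resp. $\sh^{\sgn}_{(1^n)}$, applied after the slotwise multiplication. This is essentially the bookkeeping with towers of Young subgroups and minimum-length coset representatives already carried out in the proof of Proposition~\ref{prop:shuffle-product-well-defined}, specialized to the composition $(1,\dots,1)$; and the super-signs appearing in the second step are irrelevant, since only a spanning statement is needed, not an exact formula.
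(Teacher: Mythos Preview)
Your argument is correct and follows essentially the same route as the paper's proof: both identify the $n$-fold (signed) shuffle of degree-one elements with the full (anti)symmetrizer applied to a slotwise product in $\Sym_\pm V^n$, observe that such slotwise products span $\Sym_\pm V^n$, and then use $\chr(\kk)=0$ so that the (anti)symmetrizer surjects onto $R_n$ (resp.\ $R^{\sgn}_n$). The only cosmetic difference is that the paper writes this out in an explicit monomial basis $X_{n,r_0}^A\Theta_{n,r_1}^B$ rather than invoking the graded tensor decomposition $\Sym_\pm V^n\cong(\Sym_\pm V)^{\otimes n}$ abstractly.
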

\begin{proof}
Abbreviating the composition $1^n=(1,1,\ldots,1)$,
one must show surjectivity of these maps:
\begin{align}
\label{eq:unsigned-iso-in-degree-n}
\shuffle_{1^n}:
(\Sym_\pm V)^{\otimes n}
&=R_1^{\otimes n} \longrightarrow R_n =
(\Sym_\pm V^n)^{\symm_n},\\
\label{eq:signed-iso-in-degree-n}
\shuffle^{\sgn}_{1^n}:(\Sym_\pm V)^{\otimes n}
&=(R^{\sgn}_1)^{\otimes n} \longrightarrow R^{\sgn}_n =
(\Sym_\pm V^n)^{\symm_n,\sgn}.
\end{align}
We will exhibit $\kk$-spanning sets for the right sides, and
show they are in the image of these maps.  As in Example~\ref{ex:diagonally-symmetric-example}, after picking 
bases $x_1,\ldots,x_{r_0}$ for $V_{\bar{0}}$
and $\theta_1,\ldots,\theta_{r_1}$ for
$V_{\bar{1}}$, one can identify
\begin{align*}
\Sym_\pm V&=\Sym_\pm(x_1,\ldots,x_{r_0},
\theta_1,\ldots,\theta_{r_1}),\\
\Sym_\pm V^n&=\Sym_{\pm}(X_{n,r_0},\Theta_{n,r_1}),\\
R_n&=\Sym_\pm(X_{n,r_0},\Theta_{n,r_1})^{\symm_n},\\
R^{\sgn}_n&=\Sym_\pm(X_{n,r_0},\Theta_{n,r_1})^{\symm_n,\sgn}
\end{align*}
where $X_{n,r_0},\Theta_{n,r_1}$
are the matrices of commuting and anticommuting variables in \eqref{eq:matrices-of-variables}.
A typical $\kk$-basis element for
$\Sym_{\pm}(X_{n,r_0},\Theta_{n,r_1})$ is a monomial
indexed by matrices $A \in \NN^{n \times r_0}, B \in \{0,1\}^{n \times r_1}$:
$$
X_{n,r_0}^A \Theta_{n,r_1}^B:=
\prod_{\substack{i=1,\ldots,n\\j=1,\ldots,r_0}}  x_{ij}^{a_{ij}} 
\cdot 
\prod_{\substack{i=1,\ldots,n\\j=1,\ldots,r_1}}  \theta_{ij}^{b_{ij}}.
$$
Since $\chr(\kk)=0$, the elements
$e=\sum_{\sigma \in \symm_n} \sigma$
and $e^{\sgn}=\sum_{\sigma \in \symm_n} \sgn(\sigma) \cdot \sigma$ in $\kk\symm_n$
will project onto $\symm_n$-invariant 
and $\symm_n$-antiinvariant subspaces.
Therefore 
\begin{itemize}
\item $R_n=\Sym_\pm(X_{n,r_0},\Theta_{n,r_1})^{\symm_n}$ is spanned by 
$\sum_{\sigma \in \symm_n} \sigma(X_{n,r_0}^A \Theta_{n,r_1}^B)
$, and
\item $R^{\sgn}_n=\Sym_\pm(X_{n,r_0},\Theta_{n,r_1})^{\symm_n,\sgn}$ is spanned by
$\sum_{\sigma \in \symm_n} \sgn(\sigma) \cdot \sigma(X_{n,r_0}^A \Theta_{n,r_1}^B)$.
\end{itemize}
However, 
these are images of elements under
the maps \eqref{eq:unsigned-iso-in-degree-n}, \eqref{eq:signed-iso-in-degree-n}, illustrated here for the latter:
\begin{align*}
\sum_{\sigma \in \symm_n} \sgn(\sigma) \cdot \sigma(X_{n,r_0}^A \Theta_{n,r_1}^B)
&=\left(
x_1^{a_{11}} x_2^{a_{12}} \cdots 
x_{r_0}^{a_{1r_0}} \cdot
\theta_1^{b_{11}} \theta_2^{b_{12}} \cdots 
\theta_{r_0}^{b_{1r_0}}
\right)\\
&\shuffle^{\sgn}
\left(
x_1^{a_{21}} x_2^{a_{22}} \cdots 
x_{r_0}^{a_{2r_0}} \cdot
\theta_1^{b_{21}} \theta_2^{b_{22}} \cdots 
\theta_{r_0}^{b_{2r_0}}
\right)\\
&\quad \vdots \\
&\shuffle^{\sgn}
\left(
x_1^{a_{n1}} x_2^{a_{n2}} \cdots 
x_{r_0}^{a_{nr_0}} \cdot
\theta_1^{b_{n1}} \theta_2^{b_{n2}} \cdots 
\theta_{r_0}^{b_{nr_0}}
\right). 
\end{align*}
See Example~\ref{ex:surjectivity-example} below.
Hence the maps \eqref{eq:unsigned-iso-in-degree-n},\eqref{eq:signed-iso-in-degree-n} surject,
completing the proof.
\end{proof}

\begin{example}
\label{ex:surjectivity-example}
As in
Example~\ref{ex:diagonally-symmetric-example}, let $(r_0,r_1)=(3,2)$ and
$\Sym_\pm V=\Sym_\pm(x,y,z,\alpha,\beta)$. Within
$$
R_4=\Sym_\pm(x_1,x_2,x_3,x_4, \,\,
y_1,y_2,y_3,y_4, \,\,
z_1,z_2,z_3,z_4, \,\,
\alpha_1,\alpha_2,\alpha_3,\alpha_4, \,\, \\
\beta_1,\beta_2,\beta_3,\beta_4)^{\symm_4}
$$
one finds this $\symm_4$-symmetrized monomial   
$
\sum_{\sigma \in \symm_4}
\sigma\left(X_{43}^A \Theta_{42}^B \right)
$
where 
%\begin{align*}    X_{43}^A \Theta_{42}^B    &=(x_1^5 y_1^3)\\  &\quad \cdot (x_2^3 y_2 z_2^{10} \,\, \beta_2) \\  & \quad \cdot (x_3^5 z_3 \,\, \alpha_3  \beta_3)\\    & \quad \cdot (x_4^5 z_4\,\, \alpha_4 \beta_4)\\\end{align*}
\begin{align*}
    X_{43}^A \Theta_{42}^B
    \quad =\quad (x_1^5 y_1^3)
    \quad \cdot \quad (x_2^3 y_2 z_2^{10} \,\, \beta_2) 
    \quad \cdot \quad (x_3^5 z_3 \,\, \alpha_3  \beta_3)
    \quad \cdot \quad (x_4^5 z_4\,\, \alpha_4 \beta_4)
\end{align*}
Its $\symm_4$-symmetrization lies in the image of the map \eqref{eq:unsigned-iso-in-degree-n} for $n=4$, as this $\shuffle$ product:
$$
\sum_{\sigma \in \symm_4}
    \sigma\left(X_{43}^A \Theta_{42}^B \right)
   \quad =\quad x^5 y^3
    \quad \shuffle \quad x^3  y z^{10} \, \beta
     \quad \shuffle \quad x^5 z \, \alpha \beta
     \quad \shuffle \quad x^5 z \, \alpha \beta.
$$
\end{example}

\begin{prop}
\label{prop:quadratic-relations}
For $V$ a $\ZZ_2$-graded $\kk$-vector space $V=V_{\bar{0}} \oplus V_{\bar{1}}$, consider
the $\ZZ_2$-grading on
$$
R_1=R^{\sgn}_1=\Sym_\pm V
= \underbrace{
\bigoplus_{\substack{(i,j) \in \NN^2:\\j\text{ even}}}
(\Sym_\pm V)_{ij}}_{(\Sym_\pm V)_{\bar{0}}} 
\quad \oplus \quad
\underbrace{\bigoplus_{\substack{(i,j) \in \NN^2:\\j\text{ odd}}}
(\Sym_\pm V)_{ij}}_{(\Sym_\pm V)_{\bar{1}}}. 
$$
\begin{itemize}
\item[(i)] Under $\shuffle$, elements of $R_1$ satisfy the quadratic relations
from \eqref{eq:superpolynomial-quadratic-relations}:
elements of $(\Sym_\pm V)_{\bar{0}}$
commute with all of $R_1$,
and elements of $(\Sym_\pm V)_{\bar{1}}$
anticommute among themselves.
\item[(ii)]
Under $\shuffle^{\sgn}$, elements of $R^{\sgn}_1$ satisfy the quadratic relations
from \eqref{eq:superpolynomial-shriek-quadratic-relations}:
elements of $(\Sym_\pm V)_{\bar{0}}$
anticommute with all of $R^{\sgn}_1$,
and elements of $(\Sym_\pm V)_{\bar{1}}$
commute among themselves.
\end{itemize}
\end{prop}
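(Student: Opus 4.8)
The plan is to reduce everything to an explicit computation inside $\Sym_\pm V^2$, since the quadratic relations in question only involve products of two degree-one elements. First I would use $\kk$-bilinearity of $\shuffle$ and $\shuffle^{\sgn}$ to reduce to the case of $\ZZ_2$-homogeneous $a, b \in R_1 = R^{\sgn}_1 = \Sym_\pm V$, of $\ZZ_2$-degrees $\bar{i}$ and $\bar{j}$ respectively. For $n = 2$ the Young subgroup $\symm_{1,1}$ is trivial, so its minimum-length coset representatives form all of $\symm_2 = \{e, s_1\}$ with $s_1 = (1,2)$, whence $\sh_{1,1} = e + s_1$ and $\sh^{\sgn}_{1,1} = e - s_1$ in $\kk\symm_2$. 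Writing $c^{(1)}, c^{(2)} \in \Sym_\pm V^2$ for the images of $c \in \Sym_\pm V$ under the two $\ZZ^2$-graded algebra embeddings $\Sym_\pm V \hookrightarrow \Sym_\pm V^2$ induced by the two inclusions $V \hookrightarrow V^2 = V \oplus V$, the definition of $\mu_{1,1}$ gives $\mu_{1,1}(a \otimes b) = a^{(1)} b^{(2)}$, the product taken in $\Sym_\pm V^2$.

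Next I would run the computation. The permutation $s_1$ acts on $\Sym_\pm V^2$ as the $\kk$-algebra automorphism induced by swapping the two summands of $V^2$, so $s_1(a^{(1)}) = a^{(2)}$ and $s_1(b^{(2)}) = b^{(1)}$, giving $s_1(a^{(1)} b^{(2)}) = a^{(2)} b^{(1)}$. Hence $a \shuffle b = a^{(1)} b^{(2)} + a^{(2)} b^{(1)}$ and $a \shuffle^{\sgn} b = a^{(1)} b^{(2)} - a^{(2)} b^{(1)}$, and similarly for $b \shuffle a$, $b \shuffle^{\sgn} a$ with $a$ and $b$ interchanged. Since $\Sym_\pm V^2$ is supercommutative and $a^{(k)}, b^{(l)}$ carry $\ZZ_2$-degrees $\bar{i}$ and $\bar{j}$, one has $b^{(l)} a^{(k)} = (-1)^{\bar{i} \cdot \bar{j}} a^{(k)} b^{(l)}$ for all $k, l$; substituting this into the expansions of $b \shuffle a$ and $b \shuffle^{\sgn} a$ and comparing yields
\[
a \shuffle b = (-1)^{\bar{i} \cdot \bar{j}} \, b \shuffle a, \qquad
a \shuffle^{\sgn} b = -(-1)^{\bar{i} \cdot \bar{j}} \, b \shuffle^{\sgn} a .
\]

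Finally I would read off the claims. From the first identity: $\bar{i} = \bar{0}$ gives $a \shuffle b = b \shuffle a$, so elements of $(\Sym_\pm V)_{\bar{0}}$ commute under $\shuffle$ with all of $R_1$; and $\bar{i} = \bar{j} = \bar{1}$ gives $a \shuffle b = -b \shuffle a$, so elements of $(\Sym_\pm V)_{\bar{1}}$ anticommute among themselves under $\shuffle$ --- in particular each squares to $0$ since $2 \in \kk^\times$ --- which is (i). From the second identity the signs flip: $\bar{i} = \bar{0}$ gives $a \shuffle^{\sgn} b = -b \shuffle^{\sgn} a$ and $\bar{i} = \bar{j} = \bar{1}$ gives $a \shuffle^{\sgn} b = b \shuffle^{\sgn} a$, which is (ii). I do not expect a substantive obstacle here: the argument is a short direct calculation, and the only point requiring care is tracking the Koszul sign $(-1)^{\bar{i} \cdot \bar{j}}$ when commuting $a^{(k)}$ past $b^{(l)}$ in $\Sym_\pm V^2$ and seeing how it interacts with the sign in $e \pm s_1$ --- precisely the quadratic-duality mechanism noted in the footnote to \eqref{eq:superpolynomial-shriek-quadratic-relations}.
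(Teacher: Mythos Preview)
Your proof is correct and follows essentially the same approach as the paper's: both compute the two-term expansion of $a \shuffle b$ and $a \shuffle^{\sgn} b$ inside $\Sym_\pm V^2$ and compare with the swapped product using the supercommutativity sign. The only cosmetic difference is that the paper carries this out on an explicit monomial basis $\xx^\aa\theta^\bb$ (tracking the sign as $(-1)^{|\bb|\cdot|\dd|}$), whereas you work coordinate-free with $\ZZ_2$-homogeneous elements and invoke the supercommutativity of $\Sym_\pm V^2$ directly.
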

\begin{proof}
With notations as in Example~\ref{ex:diagonally-symmetric-example}, a typical $\kk$-basis element
of $\Sym_\pm V$ is 
$$
\xx^\aa \theta^\bb
=x_1^{a_1} \cdots x_{r_0}^{a_{r_0}}
\cdot \theta_1^{b_1} \cdots \theta_{r_1}^{b_{r_1}}
\text{ 
where }
\aa\in \NN^{r_0}, \bb\in \{0,1\}^{r_1},
$$
and it lies
in $(\Sym_\pm V)_{\bar{0}}$ or $(\Sym_\pm V)_{\bar{1}}$
depending on whether
$|\bb|:=\sum_i b_i \equiv 0$ or $1 \bmod{2}$.
Using the abbreviations
\begin{align*}
\xx^\aa_{1*}&:=x_{11}^{a_1} \cdots x_{1r_0}^{a_{r_0}}, \qquad
\xx^\aa_{2*}:=x_{21}^{a_1} \cdots x_{2r_0}^{a_{r_0}},\\
\theta^\bb_{1*}&:=\theta_{11}^{b_1} \cdots \theta_{1r_0}^{b_{r_1}},\qquad
\theta^\bb_{2*}:=\theta_{21}^{b_1} \cdots \theta_{2r_0}^{b_{r_1}},
\end{align*}
one can then compute that in $R$ with $\shuffle$ product, one has
\begin{align*}
\xx^\aa \theta^\bb
\shuffle \xx^\cc \theta^\dd
&=\xx_{1*}^{\aa} \xx_{2*}^\cc
\cdot \theta_{1*}^\bb \theta_{2*}^\dd
+
\xx_{2*}^{\aa} \xx_{1*}^\cc
\cdot \theta_{2*}^\bb \theta_{1*}^\dd\\
\text{versus }\quad\xx^\cc\theta^\dd
\shuffle \xx^\aa \theta^\bb
&=
\xx_{1*}^{\cc} \xx_{2*}^\aa
\cdot \theta_{1*}^\dd \theta_{2*}^\bb
+
\xx_{2*}^{\cc} \xx_{1*}^\aa
\cdot \theta_{2*}^\dd \theta_{1*}^\bb\\
&=
(-1)^{|\bb|\cdot|\dd|}
\left( 
\xx_{2*}^{\aa} \xx_{1*}^\cc
\cdot \theta_{2*}^\bb \theta_{1*}^\dd
+
\xx_{1*}^{\aa} \xx_{2*}^\cc
\cdot \theta_{1*}^\bb \theta_{2*}^\dd
\right)\\
&=
(-1)^{|\bb|\cdot|\dd|}
\cdot
\xx^\aa \theta^\bb
\shuffle \xx^\cc \theta^\dd.
\end{align*}
Similarly, in $R^{\sgn}$
 with $\shuffle^{\sgn}$, one has
\begin{align*}
\xx^\aa \theta^\bb
\shuffle^{\sgn} \xx^\cc \theta^\dd
&=\xx_{1*}^{\aa} \xx_{2*}^\cc
\cdot \theta_{1*}^\bb \theta_{2*}^\dd
-
\xx_{2*}^{\aa} \xx_{1*}^\cc
\cdot \theta_{2*}^\bb \theta_{1*}^\dd\\
\text{versus }\quad\xx^\cc\theta^\dd
\shuffle^{\sgn} \xx^\aa \theta^\bb
&=
\xx_{1*}^{\cc} \xx_{2*}^\aa
\cdot \theta_{1*}^\dd \theta_{2*}^\bb
-
\xx_{2*}^{\cc} \xx_{1*}^\aa
\cdot \theta_{2*}^\dd \theta_{1*}^\bb\\
&=
(-1)^{|\bb|\cdot|\dd|}
\left( 
\xx_{2*}^{\aa} \xx_{1*}^\cc
\cdot \theta_{2*}^\bb \theta_{1*}^\dd
-
\xx_{1*}^{\aa} \xx_{2*}^\cc
\cdot \theta_{1*}^\bb \theta_{2*}^\dd
\right)\\
&=
(-1)^{1+|\bb|\cdot|\dd|}
\cdot
\xx^\aa \theta^\bb
\shuffle^{\sgn} \xx^\cc \theta^\dd.
\end{align*}
The signs in these commutations conform to \eqref{eq:superpolynomial-quadratic-relations}
for $\shuffle$ on $R$, and \eqref{eq:superpolynomial-shriek-quadratic-relations} for $\shuffle^{\sgn}$ on $R^{\sgn}$.
\end{proof}

\begin{proof}[Proof of Theorem~\ref{thm:shuffle-algebra-is-superpolynomial}.]
We wish to exhibit algebra isomorphisms
\begin{align}
\label{eq:brief-shuffle-algebra-isomorphism}
\Sym_\pm( (\Sym_\pm V)^G )& \cong R_G,\\
\label{eq:brief-signed-shuffle-algebra-isomorphism}
\wedge_\pm( (\Sym_\pm V)^G )& \cong R^{\sgn}_G.
\end{align}
As noted in the Introduction,
Theorem~\ref{thm:compiled-wreath-hilbs} shows the rings on either side in 
\eqref{eq:brief-shuffle-algebra-isomorphism}, \eqref{eq:brief-signed-shuffle-algebra-isomorphism}
have the same Hilbert series. 
Thus it suffices to exhibit
a graded ring injection or surjection
between them.

For $G=\{1_V\}$, a surjective map follows from
Propositions~\ref{prop:shuffle-product-well-defined}, \ref{prop:degree-one-generation}, \ref{prop:quadratic-relations}:
the rings $R, R^{\sgn}$ are associative,
generated by the degree one component $R_1=R^{\sgn}_1=\Sym_\pm V$, and satisfy the quadratic
relations from \eqref{eq:superpolynomial-quadratic-relations}, \eqref{eq:superpolynomial-shriek-quadratic-relations}.
This gives graded ring surjections, as desired, and
hence isomorphisms.

For the case of an arbitrary group $G$, one only needs to show
that the products $\shuffle, \shuffle^{\sgn}$ on $R, R^{\sgn}$
restrict to products on $R_G, R^{\sgn}_G$:  if so, then
the ring isomorphisms exhibited in the case $G=1_V$ will restrict to
injective maps, which must then be isomorphisms.

Given $A, B$ in $(\Sym_\pm V^a)^{\symm_a[G]}$ and $(\Sym_\pm V^b)^{\symm_b[G]}$
(respectively, in $(\Sym_\pm V^a)^{\symm_a[G],\sgn}$ and $(\Sym_\pm V^b)^{\symm_b[G],\sgn}$),
one needs to check
that $A \shuffle B$ lies in
$(\Sym_\pm V)^{\symm_{a+b}[G]}$
(respectively, $A \shuffle^{\sgn} B$ lies in
$(\Sym_\pm V)^{\symm_{a+b}[G],\sgn}$).
Since $\symm_{a+b}[G]= \symm_{a + b} \ltimes G^{a+b}$
is generated by its subgroups $\symm_{a+b}$ and $G^{a+b}$, it only remains to
check that any element
$A \shuffle B$ or $A \shuffle^{\sgn} B$ is $G^{a+b}$-fixed.  Since $A$ is $G^a$-fixed and $B$ is $G^b$-fixed, the product $\mu_{a,b}(A \otimes B)$ is $G^{a+b}$-fixed,
and hence remains so after applying
either of the elements $\sh_{a,b}$ or $\sh^{\sgn}_{a,b}$ from $\kk \symm_{a+b}$.
\end{proof}

%%%%%%%%%%%%%%%%%%%%%%%%%%%%
\bibliographystyle{abbrv}
\bibliography{bibliography}

\end{document}